%
%
%
%
%
\RequirePackage{fix-cm}
\documentclass[smallextended]{svjour3}       
\smartqed  
\usepackage{graphicx}
\usepackage{etex,bm}
\usepackage{amsmath,amssymb,mathrsfs,mathtools}
\usepackage{amsfonts}
\usepackage[margin=1in]{geometry}
\usepackage{multirow,booktabs}
\usepackage{empheq}
\usepackage{array}
\usepackage{subcaption}
\usepackage[colorlinks,citecolor=red,urlcolor=blue,bookmarks=false,hypertexnames=true]{hyperref}

\newtheorem{cor}{Corollary}
%
%
%
%
%
\begin{document}
\title{Chance Constrained Robust Portfolio Optimization when the Perturbations Follow Normal and Exponential Distributions
}


\author{Pulak Swain         \and
        Akshay Kumar Ojha 
}


\author{Pulak Swain \and Akshay Kumar Ojha }
\institute{Pulak Swain \at
	School of Basic Sciences, Indian Institute of Technology Bhubaneswar \\
	\email{ps28@iitbbs.ac.in}           
	\and
	Akshay Kumar Ojha \at
	School of Basic Sciences, Indian Institute of Technology Bhubaneswar
}

\date{Received: date / Accepted: date}

\maketitle

\begin{abstract}
In this paper, we consider the chance constrained based uncertain portfolio optimization problem in which the uncertain parameters are stochastic in nature. The primary goal of the work is to formulate the uncertain problem into a deterministic model to study its robust counterpart, which will be helpful for solving such types of uncertain problems. In the present study, we assume that the uncertainty occurs in the expected asset returns, accordingly we derive the corresponding robust counterparts for the cases when the perturbations follow normal and exponential distributions. The obtained robust counterparts are computationally tractable. So our study can be used to find out the deterministic robust counterparts of any quadratic programming problems with uncertain constraints. In the end, we solve an Indian stock market problem for the nominal case as well as the cases for normally and exponentially distributed perturbations, and the results are analyzed. 
\keywords{Portfolio optimization \and Robust optimization \and Robust counterpart \and Chance constraint \and Normal distribution \and Exponential distribution}
\end{abstract}

\section{Introduction} \label{sec1}
Portfolio optimization deals with the problem of allocating total wealth of an investor among different assets. This was first introduced by Harry Markowitz in 1952 \cite{Markowitz J 1952} and afterwards it has been studied world wide \cite{Markowitz 1959}. The two important components of this problem are return and risk. The investor aims to draw a balance between the low risk and high return of the portfolio. The tradeoff between risk and return of a portfolio can be shown in terms of a curve known as the efficient frontier. It is the set of
optimal portfolios that offer the highest expected return for a defined level of risk
or the lowest risk for a given level of expected return. Several portfolio models such as mean-variance, mean-semivariance, value at risk, etc. have been used by several researchers \cite{Estrada 2002,Estrada 2007}. Markowitz's mean-variance model uses expected returns and the covariance of returns of the individual assets as the input parameters.\\
For the formulation of the Markowitz mean-variance portfolio model, let us assume that there are $n$ number of assets with the return of an arbitrary $i^{th}$ asset over a time period $t$ be given by $r_{it}$ ($t=1, 2, \dots, T$). The expected return of $i^{th}$ asset be $\mu_i$ and the covariance of return between $i^{th}$ and $j^{th}$ assets be $\sigma_{ij}$ which are defined as,
\begin{align*}
\begin{aligned}
	&\mu_i=\dfrac{1}{T} \sum_{i=1}^{T} r_{it}, \quad \sigma_{ij}=\dfrac{1}{T} \sum_{i=1}^{T} (r_{it}-\mu_i) (r_{jt}-\mu_j)
\end{aligned}
\end{align*}
Let \begin{align*}\bm{\mu}=\begin{bmatrix} 
	\mu_1\\\mu_2\\ \vdots \\\mu_n
\end{bmatrix} \text{ and } \bm{\Sigma}=\begin{bmatrix} 
\sigma_{11}  && \sigma_{12}  && \dots && \sigma_{1n}  \\
\sigma_{21} && \sigma_{22} && \dots  &&\sigma_{2n} \\
\vdots && \vdots && \ddots && \vdots\\
\sigma_{n1} && \sigma_{n2} && \dots && \sigma_{nn}\\
\end{bmatrix}
\end{align*} be respectively the expected return vector and the covariance return matrix of the $n$ assets.\\
Now suppose \begin{align*}\bm{x}=\begin{bmatrix} 
		x_1\\x_2\\ \vdots \\x_n
	\end{bmatrix}\end{align*} be the allocated weight vector of the portfolio, where $x_i$ be the weight allocated for the $i^{th}$ asset. Then the expected portfolio return ($\mu_p$) and the variance of portfolio return ($\sigma_p$) are respectively calculated as,
\begin{align*}
	\begin{aligned}
		&\mu_p=\sum_{i=1}^{n} \mu_i x_i=\bm{\mu}^\top \bm x, \quad \sigma_{p}^2=\sum_{i=1}^{n} \sum_{j=1}^{n} \sigma_{ij} x_i x_j=\bm x^\top \bm{\Sigma} \bm x
	\end{aligned}
\end{align*}
Assume that the investor wants to achieve a target portfolio return of $\tau$ by the asset allocation. The Markowitz portfolio model for this problem is given by,
\begin{align*}
	\begin{aligned}	
		&\min_{\bm x} && \frac{1}{2} \bm x^\top \bm{\Sigma} \bm x \\
		&\textrm{s.t.:} && \bm{\mu}^\top \bm x \geq \tau, \quad	\bm{e}^\top \bm x=1, \quad \bm x \geq \bm 0
	\end{aligned}
\end{align*}
where $\bm e$ is the vector of $1$'s and $\bm 0$ is the vector of $0$'s.\\
Here the main objective of the model is to minimize the portfolio variance, subject to the conditions that (i) the expected portfolio return must be equal to or exceed the target portfolio return, (ii) the sum of allocated asset weights is equal to $1$, (iii) the allocated weights must be non-negative.\\
In the literature, some researchers have also inserted some additional constraints, such as cardinality and bound constraints in the Markowitz model, which restricts the number of nonzero assets and the bound of asset weights in the portfolio \cite{Meghwani 2017}.\\
In practice, it is difficult to get the exact expected returns and the covariance of returns due to several economic factors involved in this process. So there is always the possibility of the presence of an uncertainty factor in the input parameters, and if ignored, that can affect the optimal solution leading to a wrong asset allocation. In the field of optimization, several approaches are there to counter uncertain problems, such as fuzzy programming, dynamic programming, sensitivity analysis, etc. But in the last two decades, the robust optimization approach is widely used due to its ability to give the "completely immunized against uncertainty" solution. The robust optimization approach was first studied by El Ghaoui and Lebret \cite{Ghaoui 1997} in their work based on the uncertain least-squares problems. El Ghaoui et al. \cite{Ghaoui 1998} studied the robust semidefinite problems, and they provided sufficient conditions for a robust solution to exist as semidefinite programming. Ben-Tal and Nemirovski \cite{Ben-Tal 1999} showed that the robust counterpart of uncertain linear programming with ellipsoidal uncertainty is computationally tractable. Again Ben-Tal and Nemirovski \cite{Ben-Tal 2000} used robust optimization methodology for getting solutions for 90 linear programming problems from the NETLIB collection. They wanted to check whether the robust optimization approach affects the optimality of the solutions, and their results showed that the robust solutions nearly lose nothing in optimality. Afterwards, the robust optimization approach became very popular and is widely being used in many disciplines of science and engineering to tackle uncertainty \cite{Bertsimas 2011}. Calafiore and  El Ghaoui \cite{Calafiore 2006} studied the robust optimization approach for chance constrained linear programming problems. They analyzed the case when the probability distribution of the data is not completely known, but is only known to belong to a given class of distributions. Nemirovski \cite{Nemirovski 2012} discussed several simulation-based and simulation-free computationally tractable approximations of chance constrained convex programs, in particular linear, conic quadratic and semidefinite programming problems. The optimal robust optimization approximation for chance constrained optimization problems was studied by Li and Li \cite{Li 2015}, in which they analyzed the relationship between uncertainty set size and solution reliability.\\
The evolving robust optimization approach opened new doors for the uncertain portfolio allocation problems, and it has been handy for the researchers. Goldfarb and Iyenger \cite{Goldfarb 2003} first showed how to formulate and solve robust portfolio selection problems. They introduced some uncertainty structures for the market parameters and reformulated the robust portfolio problems corresponding to those uncertainty structures into second order cone programming. T\"{u}t\"{u}nc\"{u} and Koenig \cite{Tutuncu 2004} studied the case when the uncertainty in the expected return vector and the covariance return matrix are defined in terms of lower and upper bounds. The robust optimization approaches to multiperiod portfolio selection problems were studied by Bertsimas and Pachamanova \cite{Bertsimas 2008}. Their robust multiperiod portfolio optimization formulations were linear and computationally efficient. Fliege and Werner \cite{Fliege 2014} studied robust multiobjective optimization and its application on uncertain mean-variance portfolio problems. Kim et al. \cite{Kim 2018} did a comprehensive analysis of robust portfolio performance in the U.S. market from 1980 to 2014 and showed the advantage of robust portfolio optimization for controlling uncertainty. Ismail and
Pham \cite{Ismail 2019} studied a robust continuous-time Markowitz model with an uncertain covariance matrix of risky assets. They also compared the performance of Sharpe ratios for a robust investor and an investor with a misspecified model. Sehgal and Mehra \cite{Sehgal 2019} proposed robust portfolio optimization models for reward–risk ratios utilizing Omega, semi-mean absolute deviation ratio, and weighted stable tail adjusted return ratio, and they evaluated the performance of these models on the listed stocks of FTSE 100, Nikkei 225, S\&P 500, and S\&P BSE 500.\\
The paper's main contribution is as follows: Since the chance constrained problems are generally challenging to solve and are intractable, here we derive the deterministic form of the robust counterparts. Our study is based on cases when perturbations follow normal and exponential distributions. The results are then implemented in an Indian stock market problem to determine the robust optimal allocation. In addition, risk-return efficient frontiers are drawn for the nominal and the two above-mentioned distribution cases. A dissimilarity analysis among the results of the three models is also done in the end.\\
The rest of the paper is organized as follows: Section \ref{sec2} presents some definitions and basic concepts regarding robust optimization and the chance constrained based portfolio optimization problems. Then the deterministic robust counterparts of the uncertain portfolio problem for the normally and exponentially distributed perturbations are derived in Section \ref{sec3}. A numerical example of a stock market problem is given in Section \ref{sec4}. Finally, some concluding remarks are provided in Section \ref{sec5}.
\section{Preliminaries} \label{sec2}
\subsection{Uncertain Optimization Problems and the Robust Counterpart}
An uncertain optimization problem is defined as the optimization problem in which the constant coefficients are not known exactly; rather we only know that those values perturb around some nominal values. Mathematically this can be represented as,
\begin{align} \label{eq0}
		\begin{aligned}	
			&\min_{\bm x} && f(\bm{x},\bm{u}) \\
			&\textrm{s.t.:} && c(\bm{x},\bm{u}) \leq 0, \quad \forall \bm u = \bm{u}^{(0)}+\displaystyle{\sum_{j=1}^{n}}\zeta_j \bm{u}^{(j)}  \in \mathscr{U} 
		\end{aligned}
\end{align}
where $\bm x$ is the vector of decision variables, $\bm u$ is the vector of uncertain variables, $\bm{u}^{(0)}$ is the nominal vector of the uncertain parameters and $\bm{u}^{(j)}$ are the basic shifts in the uncertain parameters.\\
The uncertain set $\mathscr{U}$ in problem \eqref{eq0} is represented by the affine parameterization of perturbations $\zeta_j$'s. To understand this suppose our problem has $n$ uncertain parameters $u_1, u_2, \dots, u_n$, whose nominal values are $u_1^{(0)}, u_2^{(0)}, \dots, u_n^{(0)}$. Let the values of $u_1, u_2, \dots, u_n$ perturb upto $\delta_1, \delta_2, \dots, \delta_n$ respectively. Then the uncertain set $\mathscr{U}$ can be written as either of the two following forms:
\begin{align*}
	\begin{aligned}	
		\mathscr{U}=\left\{ \begin{bmatrix} 
			u_1\\u_2\\ \vdots \\u_n
		\end{bmatrix} : \begin{bmatrix} 
			u_1^{(0)}\\u_2^{(0)}\\ \vdots \\u_n^{(0)}
		\end{bmatrix}+ \zeta_1 \begin{bmatrix} 
			\delta_1\\0\\ \vdots \\0 \end{bmatrix} + \zeta_2 \begin{bmatrix} 
			0\\ \delta_2\\ \vdots \\0 \end{bmatrix} + \dots +\zeta_n \begin{bmatrix} 
			0\\0\\ \vdots \\ \delta_n \end{bmatrix},\zeta_j \in [-1, 1],\ j=1,2, \dots, n \right\}
	\end{aligned}
\end{align*}
or \begin{align*}
	\begin{aligned}	
		\mathscr{U}=\left\{ \begin{bmatrix} 
			u_1\\u_2\\ \vdots \\u_n
		\end{bmatrix} : \begin{bmatrix} 
		u_1^{(0)}\\u_2^{(0)}\\ \vdots \\u_n^{(0)}
	\end{bmatrix}+ \zeta_1 \begin{bmatrix} 
	1\\0\\ \vdots \\0 \end{bmatrix} + \zeta_2 \begin{bmatrix} 
	0\\ 1\\ \vdots \\0 \end{bmatrix} + \dots +\zeta_n \begin{bmatrix} 
	0\\0\\ \vdots \\ 1 \end{bmatrix},\ \zeta_j \in [-\delta_j, \delta_j],\ j=1,2, \dots, n \right\}
\end{aligned}
\end{align*}
Next let us go through some important definitions related to the robust optimization.
\begin{definition} [Robust Feasible Solution] \label{def1} \cite{Ben-Tal 2009}
A vector $\bm x$ is said to be the robust feasible solution of the uncertain problem \eqref{eq0} if it satisfies the uncertain constraints for all realizations of the uncertain set $\mathscr{U}$, that is, if $\bm x$ satisfies
\begin{align*}
\begin{aligned}	
c(\bm{x},\bm{u}) \leq 0, \quad \forall \bm u  \in \mathscr{U} 
\end{aligned}
\end{align*}
\end{definition}
\begin{definition} [Robust Value] \label{def2} \cite{Ben-Tal 2009}
	Given a candidate solution $\bm{x}$, the robust value $\widehat{f}(\bm{x})$ of the objective in problem \eqref{eq0} is the largest value of $f(\bm{x},\bm{u})$ over all realizations of the data from the uncertain set, that is
	\begin{align*}
	\begin{aligned}	
	\widehat{f}(\bm{x})= \sup_{\bm u  \in \mathscr{U}} f(\bm{x},\bm{u})
	\end{aligned}
	\end{align*}
\end{definition}
It is to be noted that if the problem \eqref{eq0} were a maximization problem, then the robust value would have been $\widehat{f}(\bm{x})= \inf_{\bm u  \in \mathscr{U}} f(\bm{x},\bm{u})$.
\begin{definition} [Robust Counterpart] \label{def3} \cite{Ben-Tal 2009}
The robust counterpart (RC) of the uncertain problem \eqref{eq0} is the optimization problem 
\begin{align*}	
\begin{aligned} 
& \min_{\bm x} && \left \{ \sup_{\bm u \in \mathscr{U}}	f(\bm x, \bm u) \right \}\\
&\text{s.t.:} && c(\bm x, \bm u) \leq \bm 0, \quad \forall \bm u  \in \mathscr{U}
\end{aligned}
\end{align*}
of minimizing the robust value of the objective over all the robust feasible solutions to the uncertain problem.
\end{definition}
\begin{definition} [Robust Optimal Solution] \label{def4} \cite{Ben-Tal 2009}
The solution of the RC problem is said to be the robust optimal solution of the uncertain problem \eqref{eq0}.
\end{definition}
\subsection{Probability Chance Constraint}
Consider the uncertain constraint given by,
\begin{align}
	\begin{aligned}	\label{eq1}
		c(\bm x, \bm u) \geq 0, \quad \bm{u}= \bm{u}^{(0)}+\displaystyle{\sum_{j=1}^{n}}\zeta_j \bm{u}^{(j)}
	\end{aligned}
\end{align}
In the ideal situation, we would like to find a solution $\bm x$ which satisfies the constraint (\ref{eq1}) for all realizations of the uncertain set. But when the data perturbation is stochastic in nature, this is not always possible to find such a solution. So we look for a candidate solution that satisfies (\ref{eq1}) for "nearly all" realizations of $\bm{\zeta}$. So the probability chance constraint to (\ref{eq1}) is introduced which is given by \cite{Pagnoncelli 2015}, 
\begin{align}
\begin{aligned}		\label{eq2}
{Prob}_{\bm{\zeta}\sim P} \left \{\bm{\zeta}: c(\bm x, \bm u) \geq 0, \quad \bm{u}= \bm{u}^{(0)}+\displaystyle{\sum_{j=1}^{n}}\zeta_j \bm{u}^{(j)} \right \} \geq \beta
\end{aligned}
\end{align}
Eq. (\ref{eq2}) indicates that the constraint (\ref{eq1}) would be satisfied with a probability of at least $\beta$, where $\beta$ is the level of confidence and its value lies in the interval $(0,1)$. We aim to make $\beta$ as close as possible to 1. In this chance constrained problem, the perturbation vector $\bm{\zeta}$ is treated as the random variable with the probability distribution $P$.\\
One of the most significant drawbacks of these chance constraint problems is the tractability issues of the robust counterparts. The problem becomes NP-hard even for the cases when $P$ is simple.
\subsection{Formulation of Uncertain Portfolio Problem with Chance Constraint}
It has been found from the studies that the uncertainty in the covariance of asset returns does not affect the optimal solution as much as the uncertainty in expected asset returns does \cite{Pulak 2021}. So let us assume that the uncertainty occurs only in the assets' expected returns.\\ 
Then the uncertain portfolio optimization model \cite{Fabozzi 2007} is given by,
\begin{align}
\begin{aligned}	\label{eq3}
	&\min_{\bm x} && \frac{1}{2} \bm x^\top \bm{\Sigma} \bm x \\
	&\textrm{s.t.:} && \bm{\mu}^\top \bm x \geq \tau, \quad \bm{\mu} \equiv \bm{\mu}(\bm{\zeta})= \bm{\mu}^{(0)}+\displaystyle{\sum_{j=1}^{n}}\zeta_j \bm{\mu}^{(j)} \in \mathscr{U}_{\bm{\mu}}, \\
	& &&	\bm{e}^\top \bm x=1, \quad \bm x \geq \bm 0
\end{aligned}
\end{align}
where the expected return vector $\bm{\mu}$ belongs to some uncertain set $\mathscr{U}_{\bm{\mu}}$ and $\bm{\zeta}=[\zeta_1 \ \zeta_2 \ \dots \ \zeta_n]^\top$ be the perturbation vector associated with $\bm{\mu}$.\\
The corresponding nominal model of the problem \eqref{eq3} is given by,
\begin{align}
\begin{aligned}	\label{eq3a}
&\min_{\bm x} && \frac{1}{2} \bm x^\top \bm{\Sigma} \bm x \\
&\textrm{s.t.:} && [\bm{\mu^{(0)}}]^\top \bm x \geq \tau, \\
& &&	\bm{e}^\top \bm x=1, \quad \bm x \geq \bm 0
\end{aligned}
\end{align}
When the perturbations are stochastic and follow some probability distributions, the problem \eqref{eq3} can be written with chance constraint as,
\begin{align}
\begin{aligned}	\label{eq4}
	&\min_{x_i} && \frac{1}{2} \bm x^\top \bm{\Sigma} \bm x \\
	&\textrm{s.t.:} && {Prob}_{{\bm{\zeta}}\sim P} \left \{ [\bm{\mu}(\bm{\zeta})]^\top \bm{x} \geq \tau \right \} \geq \beta,\\
	& && \bm{e}^\top \bm x=1, \quad \bm x \geq \bm 0.
\end{aligned}
\end{align}
The chance constraint in problem (\ref{eq4}) can be written as,
\begin{align}
\begin{aligned}	\label{eq5}
& {Prob}_{\bm{\zeta}\sim P} \left \{[\bm{\mu}(\bm{\zeta})]^\top \bm x <\tau \right \} \leq 1-\beta
\end{aligned}
\end{align}
The aim is to find the robust counterparts by considering the perturbation vector $\bm{\zeta}$ as the random variable with different probability distributions $P$.
\section{Deterministic Robust Counterparts for the Perturbations with Several Known Probability Distributions} \label{sec3}
In general, the chance constrained problems are difficult to solve due to the intractability of the robust counterparts in most cases. So in this section, we aim to derive the deterministic form of robust counterparts when the probability distribution of $\bm{\zeta}$ is known.\\
Since the objective function in our portfolio problem is certain, the value of the objective function becomes the robust value, and thus to find out the robust counterpart of the problem \eqref{eq4} we need to replace the chance constraint with a deterministic constraint which would be feasible. Going forward, we must first write the chance constraint in component form for convenience.\\
 Now the uncertain vector $\bm{\mu}(\bm{\zeta})$ can be written as,
\begin{align*}
\begin{aligned}
& \bm{\mu}(\bm{\zeta})&&=\begin{bmatrix} 
\mu_1^{(0)}\\ \mu_2^{(0)}\\ \vdots \\ \mu_n^{(0)}
\end{bmatrix}+ \zeta_1 \begin{bmatrix} 
\mu_1^{(1)}\\0\\ \vdots \\0 \end{bmatrix} + \zeta_2 \begin{bmatrix} 
0\\ \mu_2^{(2)}\\ \vdots \\0 \end{bmatrix} + \dots +\zeta_n \begin{bmatrix} 
0\\0\\ \vdots \\ \mu_n^{(n)} \end{bmatrix}\\
& &&= \begin{bmatrix} 
\mu_1^{(0)}+\zeta_1 \mu_1^{(1)}\\ \mu_2^{(0)}+\zeta_2 \mu_2^{(2)}\\ \vdots \\ \mu_n^{(0)}+\zeta_n \mu_n^{(n)} \end{bmatrix}
\end{aligned}
\end{align*}
So the left-hand side of the uncertain constraint in component form reduces to,
\begin{align*}
\begin{aligned}
& [\bm{\mu}(\bm{\zeta})]^\top \bm x&&= \begin{bmatrix} 
\mu_1^{(0)}+\zeta_1 \mu_1^{(1)}\\ \mu_2^{(0)}+\zeta_2 \mu_2^{(2)}\\ \vdots \\ \mu_n^{(0)}+\zeta_n \mu_n^{(n)} \end{bmatrix}^\top \begin{bmatrix} 
x_1\\ x_2 \\ \vdots \\ x_n \end{bmatrix}\\
& &&=\displaystyle{\sum_{j=1}^{n}} \mu_j^{(0)} x_j+ \displaystyle{\sum_{j=1}^{n}}(\mu_j^{(j)} x_j)\zeta_j
\end{aligned}
\end{align*}
Then we can rewrite the chance constraint (\ref{eq5}) in the form of $\mu_j$ and $\zeta_j$ components as,
\begin{align}
	\begin{aligned}	\label{eq6}
		&{Prob}_{\bm{\zeta_j}} \left \{\displaystyle{\sum_{j=1}^{n}} \mu_j^{(0)} x_j+ \displaystyle{\sum_{j=1}^{n}}(\mu_j^{(j)} x_j)\zeta_j <\tau \right \} \leq 1-\beta\\
		\implies & {Prob}_{\bm{\zeta_j}} \left \{ \displaystyle{\sum_{j=1}^{n}}(\mu_j^{(j)} x_j)\zeta_j <\tau-\displaystyle{\sum_{j=1}^{n}} \mu_j^{(0)} x_j \right \} \leq 1-\beta
	\end{aligned}
\end{align}
The chance constraint (\ref{eq6}) can be written as,
\begin{align}
	\begin{aligned}	\label{eq7}
		& F_{Y} (\tau-\displaystyle{\sum_{j=1}^{n}} \mu_j^{(0)} x_j) \leq 1-\beta;
	\end{aligned}
\end{align}
where $F_{Y} (\tau-\displaystyle{\sum_{j=1}^{n}} \mu_j^{(0)} x_j)$ is the cumulative distribution function (CDF) of the random variable $Y=\sum_{j=1}^{n} (\mu_j^{(j)} x_j)\zeta_j$.\\
Therefore to obtain the deterministic form of our chance constrained problem, we require the CDF of the random variable $Y$, and for that, we need to find out the probability distribution of $Y$. Next, we discuss two cases-- when the perturbations $\zeta_j$'s follow (i) normal and (ii) exponential distributions.
\subsection{When the Perturbations Follow Normal Distribution}
Suppose the perturbations $\zeta_1, \zeta_2, \dots, \zeta_n$ are independent and identically distributed normal random variables, having means $m_1, m_2, \dots, m_n$ and standard deviations $s_1, s_2, \dots, s_n$ respectively.\\
 The probability density function (PDF) of the random variable $\zeta_j$ is given by,
 \begin{align*}
 f(\zeta_j)=\dfrac{1}{s_j \sqrt{2 \pi}} e^{\frac{-(\zeta_j)^2}{2 (s_j)^2}}
 \end{align*}
\begin{figure}[h!]
	\centering
	\subfloat[PDF of $\zeta_j$ with $\text{mean}=0$ and $\text{standard deviation}=1$ \label{fig0a}]{\includegraphics[height=5cm,width=7.5cm]{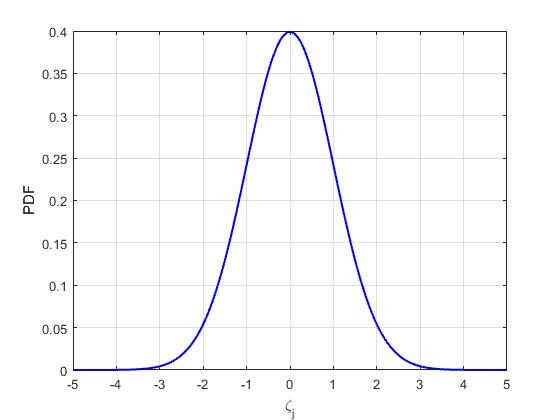}}\qquad
	\subfloat[CDF of $\zeta_j$ with $\text{mean}=0$ and $\text{standard deviation}=1$\label{fig0b}]{\includegraphics[height=5cm,width=7.5cm]{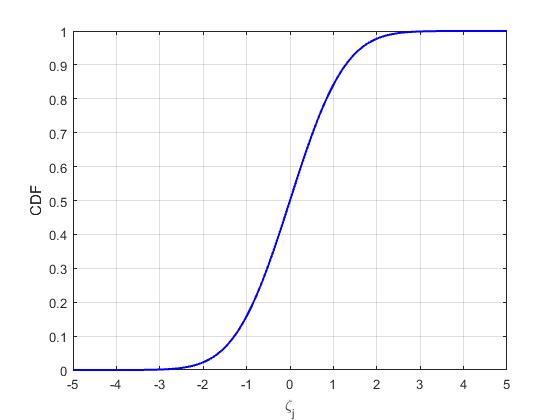}}
	\caption{Probability Density Function (PDF) and Cumulative Distribution Function (CDF) of the normally distributed perturbation $\zeta_j$}
		\label{fig0}
\end{figure}
 We want to find out the deterministic form of our chance constrained problem.\\
\begin{lemma} \label{lemma1}
	If $\zeta_j,\ j=1, 2, \dots, n$	are independent and identically distributed normal random variables, with mean $m_j$ and standard deviation $s_j$, then the random variable $Y=\sum_{j=1}^{n} (\mu_j^{(j)} x_j)\zeta_j$ follows the normal distribution and it has mean $m=\sum_{j=1}^{n} (\mu_j^{(j)} x_j)m_j$ and standard deviation $s=\sqrt{\sum_{j=1}^{n}(s_j \mu_j^{(j)} x_j)^2}$.
\end{lemma}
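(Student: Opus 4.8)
The plan is to exploit the well-known stability of the normal family under affine scaling and independent summation, carried out cleanly through moment generating functions (MGFs). Writing $a_j := \mu_j^{(j)} x_j$, which for a fixed candidate allocation $\bm x$ is a deterministic scalar, the quantity in question is $Y = \sum_{j=1}^{n} a_j \zeta_j$, a linear combination of independent normal random variables. Since a normal law is uniquely determined by its MGF (which exists for every real argument), it suffices to compute $M_Y$ and recognise it as the MGF of a normal distribution with the asserted mean and variance.

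First I would record the MGF of a single summand. Since $\zeta_j \sim N(m_j, s_j^2)$, its MGF is $M_{\zeta_j}(t) = \exp\!\left(m_j t + \tfrac{1}{2} s_j^2 t^2\right)$, so the scaled variable $a_j \zeta_j$ has MGF $M_{a_j \zeta_j}(t) = M_{\zeta_j}(a_j t) = \exp\!\left(a_j m_j t + \tfrac{1}{2} a_j^2 s_j^2 t^2\right)$, which is exactly the MGF of $N\!\left(a_j m_j,\, a_j^2 s_j^2\right)$. Next I would use the independence of the $\zeta_j$ to factor the MGF of the sum; because independence is preserved under the deterministic scalings $a_j$, this gives $M_Y(t) = \prod_{j=1}^{n} M_{a_j \zeta_j}(t) = \exp\!\left(\left(\sum_{j=1}^{n} a_j m_j\right) t + \tfrac{1}{2}\left(\sum_{j=1}^{n} a_j^2 s_j^2\right) t^2\right)$.

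Reading off the coefficients and comparing with the canonical normal MGF then identifies $Y$ as normal with mean $m = \sum_j a_j m_j = \sum_j (\mu_j^{(j)} x_j) m_j$ and variance $s^2 = \sum_j a_j^2 s_j^2 = \sum_j (s_j \mu_j^{(j)} x_j)^2$, so that the standard deviation is $s = \sqrt{\sum_j (s_j \mu_j^{(j)} x_j)^2}$, exactly as claimed.

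I do not anticipate a serious obstacle, since every step is a textbook property of Gaussians; the only points deserving a word of care are (i) treating the coefficients $a_j = \mu_j^{(j)} x_j$ as fixed constants during the probability computation, which is legitimate because the chance constraint is evaluated at a candidate allocation $\bm x$, and (ii) invoking the uniqueness theorem for MGFs to conclude actual normality, rather than merely matching the first two moments. If one prefers to avoid MGFs, the same conclusion follows by repeated convolution of the Gaussian densities, but that route is computationally heavier, so the MGF argument is the more economical one.
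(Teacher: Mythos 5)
Your proposal is correct and follows essentially the same route as the paper's own proof: compute the MGF of each scaled summand $(\mu_j^{(j)} x_j)\zeta_j$, multiply them using independence, and identify the resulting product as the MGF of a normal law with the stated mean and variance. Your explicit appeal to the uniqueness theorem for MGFs is a point of rigor the paper leaves implicit, but the argument is the same.
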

\begin{proof}
The moment generating function (MGF) of normally distributed random perturbation $\zeta_j$ with mean $m_j$ and standard deviation $s_j$ is given by,
\begin{align}
\begin{aligned} \label{eq8}	
M_{\zeta_j}(z)= E[e^{z\zeta_j}]= e^{m_jz+\frac{1}{2}s_j^2 z^2}
\end{aligned}
\end{align}
In the random variable $(\mu_j^{(j)} x_j)\zeta_j$ the coefficient of $\zeta_j$, that is, $\mu_j^{(j)}x_j$ is free from any probability distribution.\\
This implies,
\begin{align*}
\begin{aligned}	
M_{(\mu_j^{(j)}x_j)\zeta_j}(z)= E[e^{z(\mu_j^{(j)}x_j\zeta_j)}]= e^{(\mu_j^{(j)} x_j m_j)z+\frac{1}{2} (\mu_j^{(j)}x_js_j)^2 z^2}
\end{aligned}
\end{align*}
Now MGF of the random variable $Y=\sum_{j=1}^{n} (\mu_j^{(j)} x_j)\zeta_j$ is given by,
\begin{align}
\begin{aligned}	\label{eq9}
& M_{Y}(z)&& = M_{\sum_{j=1}^{n} (\mu_j^{(j)} x_j)\zeta_j}(z)\\
& &&= \prod_{j=1}^{n}  M_{(\mu_j^{(j)} x_j)\zeta_j}(z)\\
& &&= \prod_{j=1}^{n}  e^{(\mu_j^{(j)} x_j m_j)z+\frac{1}{2} (\mu_j^{(j)}x_js_j)^2 z^2}\\
& &&= e^{\sum_{j=1}^{n}\left [\mu_j^{(j)} x_j m_j z+\frac{1}{2}s_j^2 (\mu_j^{(j)})^2x_j^2z^2\right ]}
\end{aligned}
\end{align}
From the MGF equation \eqref{eq9}, it is clear that the random variable $Y$ is normally distributed with its mean and standard deviation respectively given by,
\begin{align}
\begin{aligned}	\label{eq10}
m=\sum_{j=1}^{n} \mu_j^{(j)} x_j m_j, \text{ and } s= \sqrt{\displaystyle{\sum_{j=1}^{n}}(s_j \mu_j^{(j)} x_j)^2}
\end{aligned}
\end{align}
\qed
\end{proof}
By using Lemma \ref{lemma1} the probability density function of normally distributed random variable $Y$ becomes,
\begin{align}
	\begin{aligned}	\label{eq11}
		f(y)= &\dfrac{1}{s\sqrt{2 \pi}} \cdot e^{\dfrac{-(y-m)^2}{2v}}
	\end{aligned}
\end{align}
where $m$, $s$ and $v$ are respectively the mean, standard deviation and variance of $Y$.\\
Now the chance constraint (\ref{eq7}) can be written as,
\begin{align}
	\begin{aligned}	\label{eq12}
		& \int_{-\infty}^{\tau-\displaystyle{\sum_{j=1}^{n}}\mu_j^{(0)} x_j} f(y) \cdot dy \leq 1- \beta\\
		\implies & \dfrac{1}{s\sqrt{2 \pi}} \int_{-\infty}^{\tau-\displaystyle{\sum_{j=1}^{n}}\mu_j^{(0)} x_j} e^{\dfrac{-(y-m)^2}{2v}} \cdot dy \leq 1- \beta
	\end{aligned}
\end{align}
The value of integration $\displaystyle{\int e^{\dfrac{-(y-m)^2}{2v}} \cdot dy}$ equals to $\dfrac{\sqrt{\pi}\sqrt{v}\cdot erf\left(\dfrac{y-m}{\sqrt{2}\sqrt{v}}\right)}{\sqrt{2}}$,
where $erf(\cdot)$ is the Gauss error function.\\
Therefore the chance constraint (\ref{eq12}) reduces to,
\begin{align}
	\begin{aligned}	\label{eq13}
		& \dfrac{1}{s\sqrt{2 \pi}} \left [ \dfrac{\sqrt{\pi}\sqrt{v}\cdot erf\left(\dfrac{y-m}{\sqrt{2}\sqrt{v}}\right)}{\sqrt{2}} \right ]\Biggr|_{-\infty}^{\tau-\displaystyle{\sum_{j=1}^{n}}\mu_j^{(0)} x_j} \leq 1- \beta\\
		\implies & \dfrac{1}{2} erf \left( \dfrac{y-m}{\sqrt{2}\sqrt{v}} \right)\Biggr|_{-\infty}^{\tau-\displaystyle{\sum_{j=1}^{n}}\mu_j^{(0)} x_j} \leq 1- \beta 
	\end{aligned}
\end{align}
Using the values of $m$ and $v=s^2$ from eq (\ref{eq10}) our chance constraint (\ref{eq13}) becomes,
\begin{align}
	\begin{aligned}	\label{eq14}
	& \dfrac{1}{2} erf \left( \dfrac{y-\displaystyle{\sum_{j=1}^{n}} \mu_j^{(j)} x_j m_j}{\sqrt{2}\sqrt{\displaystyle{\sum_{j=1}^{n}}(s_j \mu_j^{(j)} x_j)^2}} \right)\Biggr|_{-\infty}^{\tau-\displaystyle{\sum_{j=1}^{n}}\mu_j^{(0)} x_j} \leq 1- \beta \\
	\implies & \dfrac{1}{2} \left [ erf \left( \dfrac{\tau-\left (\displaystyle{\sum_{j=1}^{n}}\mu_j^{(0)} x_j+\sum_{j=1}^{n} \mu_j^{(j)} x_j m_j\right )}{\sqrt{2}\sqrt{\displaystyle{\sum_{j=1}^{n}}(s_j \mu_j^{(j)} x_j)^2}} \right) - (-1)\right ] \leq 1- \beta,
	\end{aligned}
\end{align}
as $erf(-\infty)=-1$.\\
Now the chance constraint (\ref{eq14}) can be simplified as,
\begin{align}
	\begin{aligned}	\label{eq15}
		& erf \left [\dfrac{\tau-\left (\displaystyle{\sum_{j=1}^{n}}\mu_j^{(0)} x_j+\sum_{j=1}^{n} \mu_j^{(j)} x_j m_j\right )}{\sqrt{2}\sqrt{\displaystyle{\sum_{j=1}^{n}}(s_j \mu_j^{(j)} x_j)^2}} \right ] \leq 1- 2\beta \\
		\implies & \tau-\left (\displaystyle{\sum_{j=1}^{n}}\mu_j^{(0)} x_j+\sum_{j=1}^{n} \mu_j^{(j)} x_j m_j\right ) \leq \sqrt{2} \cdot erf^{-1}(1-2\beta) \cdot \sqrt{\displaystyle{\sum_{j=1}^{n}}(s_j \mu_j^{(j)} x_j)^2}\\
		\implies & \displaystyle{\sum_{j=1}^{n}}\mu_j^{(0)} x_j+\sum_{j=1}^{n} \mu_j^{(j)} x_j m_j+ \sqrt{2}  \cdot erf^{-1}(1-2\beta) \cdot \sqrt{\displaystyle{\sum_{j=1}^{n}}(s_j \mu_j^{(j)} x_j)^2} \geq \tau
	\end{aligned}
\end{align}
Any solution which satisfies the constraint \eqref{eq15} is a robust feasible solution of the chance constrained problem \eqref{eq4}. In addition, since the considered portfolio model has a certain objective function, the objective function is itself the robust value. Hence to obtain the robust counterpart of the problem \eqref{eq4}, we can replace the chance constraint with the deterministic constraint given in \eqref{eq15}. \\
Therefore the robust counterpart of the chance constrained problem (\ref{eq4}) can be written in a second order conic programming (SOCP) form as,
\begin{align}
	\begin{aligned}	\label{eq16}
		&\min_{x_i} && \frac{1}{2} \sum_{i=1}^{n} \sum_{j=1}^{n}\sigma_{ij} x_i x_j \\
		&\textrm{s.t.:} &&\displaystyle{\sum_{j=1}^{n}}\mu_j^{(0)} x_j+\sum_{j=1}^{n} \mu_j^{(j)} x_j m_j+ \sqrt{2}  \cdot erf^{-1}(1-2\beta) \cdot \sqrt{\displaystyle{\sum_{j=1}^{n}}(s_j \mu_j^{(j)} x_j)^2} \geq \tau,\\
		& && \sum_{i=1}^{n} {x_i}=1, \quad {x_i}\geq 0, \ i=1, 2, \dots, n.
	\end{aligned}
\end{align}
So this can be represented as the following result:
\begin{theorem} \label{theorem1}
Consider the chance constrained portfolio optimization problem (\ref{eq4}). The uncertain perturbations $\zeta_j, \ j=1, 2, \dots, n$ are independent and identically distributed normal random variables, with mean $m_j$ and standard deviation $s_j$. Then the deterministic form of the robust counterpart of the problem (\ref{eq4}) is given by a SOCP problem,
\begin{align*}
\begin{aligned}	
	&\min_{x_i} && \frac{1}{2} \sum_{i=1}^{n} \sum_{j=1}^{n} \sigma_{ij} x_i x_j \\
	&\textrm{s.t.:} &&\displaystyle{\sum_{j=1}^{n}}\mu_j^{(0)} x_j+\sum_{j=1}^{n} \mu_j^{(j)} x_j m_j+ \sqrt{2}  \cdot erf^{-1}(1-2\beta) \cdot \sqrt{\displaystyle{\sum_{j=1}^{n}}(s_j \mu_j^{(j)} x_j)^2} \geq \tau,\\
	& && \sum_{i=1}^{n} {x_i}=1, \quad {x_i}\geq 0, \ i=1, 2, \dots, n.
\end{aligned}
\end{align*}
\end{theorem}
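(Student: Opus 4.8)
The plan is to show that the probabilistic chance constraint of problem \eqref{eq4} is equivalent to the single deterministic second-order cone constraint in the statement, after which the SOCP form is immediate because the objective carries no uncertainty. First I would reproduce the reduction that leads to \eqref{eq7}: rewrite the chance constraint \eqref{eq5} in component form, separating the deterministic part $\sum_{j=1}^{n}\mu_j^{(0)}x_j$ from the random part $Y=\sum_{j=1}^{n}(\mu_j^{(j)}x_j)\zeta_j$, so that the constraint becomes a statement about the cumulative distribution function $F_Y$ evaluated at the threshold $\tau-\sum_{j=1}^{n}\mu_j^{(0)}x_j$.

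The decisive structural input is Lemma \ref{lemma1}, which I would invoke to conclude that $Y$ is itself normal with mean $m=\sum_{j=1}^{n}\mu_j^{(j)}x_j m_j$ and variance $v=s^2=\sum_{j=1}^{n}(s_j\mu_j^{(j)}x_j)^2$. With the law of $Y$ fixed, I would write $F_Y$ as the Gaussian integral, evaluate it in closed form through the Gauss error function, and use the boundary value $erf(-\infty)=-1$ to collapse the chance constraint to the scalar inequality
\begin{align*}
erf\!\left[\dfrac{\tau-m-\sum_{j=1}^{n}\mu_j^{(0)}x_j}{\sqrt{2}\,s}\right]\le 1-2\beta,
\end{align*}
which is exactly the first line of \eqref{eq15}.

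The remaining step is to solve this for the weights. Since $erf$ is strictly increasing it is invertible, and applying $erf^{-1}$ to both sides preserves the inequality, giving $\tau-\bigl(\sum_{j=1}^{n}\mu_j^{(0)}x_j+m\bigr)\le\sqrt{2}\,erf^{-1}(1-2\beta)\,s$; rearranging yields precisely the deterministic constraint of the theorem. Because $\tfrac{1}{2}\bm x^\top\bm\Sigma\bm x$ contains no uncertain data, its robust value coincides with itself by Definition \ref{def2}, so substituting this equivalent deterministic constraint for the chance constraint produces the robust counterpart in the sense of Definition \ref{def3}. It is a genuine SOCP because the norm term $\sqrt{\sum_{j=1}^{n}(s_j\mu_j^{(j)}x_j)^2}$ is a second-order cone quantity while all other constraints are affine.

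I expect the main obstacle to be bookkeeping the sign and monotonicity when passing through $erf^{-1}$. For the confidence level $\beta\in(\tfrac{1}{2},1)$ that one actually wants, the argument $1-2\beta$ is negative, hence $erf^{-1}(1-2\beta)<0$ and the norm term correctly tightens, rather than relaxes, the return requirement. I would verify carefully that dividing by the nonnegative $s$ and applying the increasing map $erf^{-1}$ do not flip the inequality, and confirm that the degenerate case $s=0$ (all weight on unperturbed assets) is handled consistently so that the derivation remains valid across the whole feasible region.
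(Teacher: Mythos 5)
Your proposal is correct and follows essentially the same route as the paper: component-wise rewriting of the chance constraint into the CDF condition \eqref{eq7}, invoking Lemma \ref{lemma1} to identify $Y$ as normal, evaluating the Gaussian integral via the error function with $erf(-\infty)=-1$, and inverting the monotone $erf$ to obtain the deterministic constraint \eqref{eq15}. Your added attention to the sign of $erf^{-1}(1-2\beta)$ for $\beta>\tfrac{1}{2}$ and to the degenerate case $s=0$ goes slightly beyond what the paper records, but does not change the argument.
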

\begin{cor}
In particular, when the uncertain perturbations $\zeta_j$, $j=1, 2, \dots, n$ in the chance constrained portfolio problem \eqref{eq4} are independent and identically distributed normal random variables, each having a zero mean, the deterministic form of the robust counterpart of the problem for $\beta=0.5$ is same as the nominal problem given by,
\begin{align*}
\begin{aligned}	
&\min_{x_i} && \frac{1}{2} \sum_{i=1}^{n} \sum_{j=1}^{n} \sigma_{ij} x_i x_j \\
&\textrm{s.t.:} &&\displaystyle{\sum_{j=1}^{n}}\mu_j^{(0)} x_j \geq \tau,\\
& && \sum_{i=1}^{n} {x_i}=1, \quad {x_i}\geq 0, \ i=1, 2, \dots, n.
\end{aligned}
\end{align*}
\end{cor}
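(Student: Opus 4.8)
The plan is to obtain the corollary as a direct specialization of Theorem \ref{theorem1}, by substituting the two stated hypotheses---namely $m_j=0$ for every $j$ and $\beta=0.5$---into the deterministic constraint of the robust counterpart and showing that it collapses exactly to the nominal constraint $\sum_{j=1}^{n}\mu_j^{(0)} x_j \geq \tau$. Since the objective function and the remaining constraints $\sum_{i=1}^{n} x_i = 1$, $x_i \geq 0$ are identical in both the robust counterpart and the nominal problem \eqref{eq3a}, and involve none of the perturbation data, it suffices to analyze the single robust inequality.

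First I would invoke the zero-mean assumption. Setting $m_j=0$ for all $j=1,2,\dots,n$ makes the middle term $\sum_{j=1}^{n} \mu_j^{(j)} x_j m_j$ vanish identically, so the robust constraint from Theorem \ref{theorem1} simplifies to
\begin{align*}
\sum_{j=1}^{n}\mu_j^{(0)} x_j + \sqrt{2}\cdot erf^{-1}(1-2\beta) \cdot \sqrt{\sum_{j=1}^{n}(s_j \mu_j^{(j)} x_j)^2} \geq \tau.
\end{align*}

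Next I would evaluate the error-function coefficient at the prescribed confidence level. With $\beta=0.5$ we have $1-2\beta=0$, and since the Gauss error function satisfies $erf(0)=0$, its inverse gives $erf^{-1}(0)=0$. Consequently the entire second-order-cone term $\sqrt{2}\cdot erf^{-1}(1-2\beta)\cdot\sqrt{\sum_{j=1}^{n}(s_j \mu_j^{(j)} x_j)^2}$ is multiplied by zero and drops out, regardless of the values of the standard deviations $s_j$. What remains is precisely $\sum_{j=1}^{n}\mu_j^{(0)} x_j \geq \tau$, which is the constraint of the nominal problem \eqref{eq3a}; together with the unchanged objective and feasibility constraints, this establishes the claimed equivalence.

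This argument is essentially a substitution, so there is no genuine analytical difficulty. The only point requiring care is the identification $erf^{-1}(0)=0$: one must observe that the confidence level $\beta=0.5$ sits exactly at the symmetry point of the normal CDF, where the inverse error function is zero, so the risk-adjustment term contributes nothing even though the $s_j$ need not vanish. This is what makes the robust counterpart coincide with the nominal model rather than merely approximate it.
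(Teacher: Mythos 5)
Your proposal is correct and follows essentially the same route as the paper: substitute $m_j=0$ to eliminate the mean term, then observe that $erf^{-1}(1-2\beta)=erf^{-1}(0)=0$ at $\beta=0.5$ so the cone term vanishes, leaving the nominal constraint. The only (harmless) difference is that you make explicit the observations that $erf(0)=0$ and that the $s_j$ need not vanish, which the paper leaves implicit.
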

\begin{proof}
When $m_j=0$, for each $j=1, 2, \dots, n$, the robust counterpart problem \eqref{eq16} becomes,
\begin{align} \label{eq16a}
\begin{aligned}	
&\min_{x_i} && \frac{1}{2} \sum_{i=1}^{n} \sum_{j=1}^{n} \sigma_{ij} x_i x_j \\
&\textrm{s.t.:} &&\displaystyle{\sum_{j=1}^{n}}\mu_j^{(0)} x_j+ \sqrt{2}  \cdot erf^{-1}(1-2\beta) \cdot \sqrt{\displaystyle{\sum_{j=1}^{n}}(s_j \mu_j^{(j)} x_j)^2} \geq \tau,\\
& && \sum_{i=1}^{n} {x_i}=1, \quad {x_i}\geq 0, \ i=1, 2, \dots, n.
\end{aligned}
\end{align}
For $\beta=0.5$ the value of $erf^{-1}(1-2\beta)$ is equal to $0$ and thus the robust counterpart problem \eqref{eq16a} reduces to,
\begin{align}
\begin{aligned}	\label{eq16b}
&\min_{x_i} && \frac{1}{2} \sum_{i=1}^{n} \sum_{j=1}^{n} \sigma_{ij} x_i x_j \\
&\textrm{s.t.:} &&\displaystyle{\sum_{j=1}^{n}}\mu_j^{(0)} x_j \geq \tau,\\
& && \sum_{i=1}^{n} {x_i}=1, \quad {x_i}\geq 0, \ i=1, 2, \dots, n,
\end{aligned}
\end{align}
which is nothing but our nominal problem \eqref{eq3a} in component form.
\qed
\end{proof}
 \subsection{When the Perturbations Follow Exponential Distribution}
Now we assume that the uncertain parameters $\zeta_1, \zeta_2, \dots, \zeta_n$ follow exponential distribution.\\
 The joint probability density function of $\zeta_1, \zeta_2, \dots, \zeta_n$ can be given as,
 \begin{align*}
 \begin{aligned}	
 &f(\zeta_1, \zeta_2, \dots, \zeta_n)=\left \{ \begin{array}{ll} \lambda_1 \lambda_2 \dots \lambda_n e^{-\lambda_1 \zeta_1-\lambda_2 \zeta_2-\dots-\lambda_n \zeta_n} &\mbox{ if } \zeta_1, \zeta_2, \dots, \zeta_n > 0\\
 0 &\mbox{ elsewhere } \end{array} \right.
 \end{aligned}
 \end{align*}
 where $E(\zeta_1)=\dfrac{1}{\lambda_1}$, $E(\zeta_2)=\dfrac{1}{\lambda_2}$, $\dots$, $E(\zeta_n)=\dfrac{1}{\lambda_n}$ and $\lambda_1, \lambda_2, \dots, \lambda_n >0$.\\
 \begin{figure}[h!]
 	\centering
 	\subfloat[PDF of $\zeta_j$ with $\text{mean}=1$ \label{fig00a}]{\includegraphics[height=5cm,width=7.5cm]{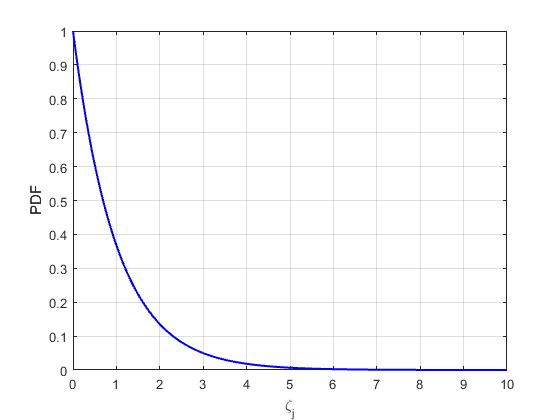}}\qquad
 	\subfloat[CDF of $\zeta_j$ with $\text{mean}=1$ \label{fig00b}]{\includegraphics[height=5cm,width=7.5cm]{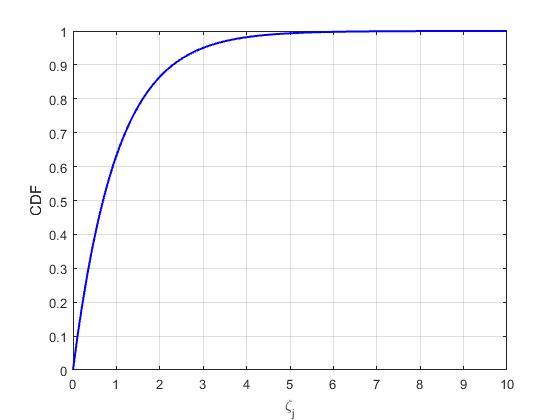}}
 	\caption{PDF and CDF of the exponentially distributed perturbation $\zeta_j$}
 	\label{fig00}
 \end{figure}	
Before proceeding further, we need to find out the probability distribution of $Y=\displaystyle{\sum_{j=1}^{n}}(\mu_j^{(j)} x_j)\zeta_j$, so that we can use it for finding out the deterministic form of the constraint \eqref{eq7}.\\
For this, we use the following result due to Biswal et al. \cite{Biswal 1998}:
\begin{theorem} \label{theorem2} \cite{Biswal 1998}
	If $a_{ij}$, $j=1, 2, \dots, n$ are independent exponential random variables with known means, then for some scalars $x_j,\ j=1, 2, \dots, n$ the probability density function of the random variable $Y_i=\sum_{j=1}^n a_{ij}x_j$ is given by,
	 \begin{align*}
		\begin{aligned}	
			&g_i(y_i)= \displaystyle{\prod_{j=1}^{n}} \lambda_{ij} \left [\sum_{k=1}^{n} \dfrac{(x_k)^{n-2}e^{\frac{-\lambda_{ik}y_i}{x_k}}}{\displaystyle{\displaystyle{\prod_{\substack{l=1 \\ l\neq k}}^{n}}} (x_k \lambda_{il}-x_l \lambda_{ik})} \right ], \quad \text{if } y_i>0
		\end{aligned}
	\end{align*}
where $E(a_{ij})=\dfrac{1}{\lambda_{ij}}$, $\lambda_{ij}>0$, $j=1, 2, \dots, n$.
	\end{theorem}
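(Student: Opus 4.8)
The plan is to identify $Y_i$ as a sum of independent exponential random variables with distinct rates (a hypoexponential distribution) and to recover its density by inverting its moment generating function through a partial fraction expansion. Since the $a_{ij}$ are independent with $E(a_{ij})=1/\lambda_{ij}$ and the $x_j$ are positive scalars, each scaled summand $a_{ij}x_j$ is again exponentially distributed, now with rate $\mu_j:=\lambda_{ij}/x_j$ and density $\mu_j e^{-\mu_j t}$ on $t>0$. First I would record the MGF of a rate-$\mu$ exponential, $E[e^{z a_{ij}x_j}]=\mu_j/(\mu_j-z)$ for $z<\mu_j$, and invoke independence to obtain the product form
\begin{equation*}
M_{Y_i}(z)=\prod_{j=1}^{n}\frac{\mu_j}{\mu_j-z}.
\end{equation*}

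The core step is a partial fraction decomposition of this rational function. Assuming the effective rates are pairwise distinct --- equivalently $x_k\lambda_{il}-x_l\lambda_{ik}\neq 0$ for $k\neq l$, which is precisely the nonvanishing of the denominators appearing in the statement --- the poles $z=\mu_k$ are all simple, so
\begin{equation*}
\prod_{j=1}^{n}\frac{\mu_j}{\mu_j-z}=\sum_{k=1}^{n}\frac{A_k}{\mu_k-z},\qquad A_k=\mu_k\prod_{\substack{l=1\\l\neq k}}^{n}\frac{\mu_l}{\mu_l-\mu_k},
\end{equation*}
where $A_k$ is the residue obtained by multiplying through by $(\mu_k-z)$ and evaluating at $z=\mu_k$. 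Since $\int_0^{\infty}e^{zy}e^{-\mu_k y}\,dy=1/(\mu_k-z)$ for $z<\mu_k$, each term $A_k/(\mu_k-z)$ inverts to $A_k e^{-\mu_k y_i}$, and hence $g_i(y_i)=\sum_{k=1}^{n}A_k e^{-\mu_k y_i}$ for $y_i>0$.

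What remains is purely algebraic bookkeeping: substituting $\mu_k=\lambda_{ik}/x_k$ back into the coefficients. Writing $\mu_l-\mu_k=(x_k\lambda_{il}-x_l\lambda_{ik})/(x_kx_l)$ turns each factor of the product into $\lambda_{il}x_k/(x_k\lambda_{il}-x_l\lambda_{ik})$, so that $\prod_{l\neq k}\mu_l/(\mu_l-\mu_k)$ contributes a factor $x_k^{\,n-1}\prod_{l\neq k}\lambda_{il}$ over the product of the denominators. Combining this with the leading $\mu_k=\lambda_{ik}/x_k$ collapses the power of $x_k$ to $x_k^{\,n-2}$ and completes the product $\lambda_{ik}\prod_{l\neq k}\lambda_{il}=\prod_{j=1}^{n}\lambda_{ij}$, which then factors out of the sum. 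This reproduces exactly the claimed density, with numerator $x_k^{\,n-2}e^{-\lambda_{ik}y_i/x_k}$ and denominator $\prod_{l\neq k}(x_k\lambda_{il}-x_l\lambda_{ik})$.

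The main obstacle will be the partial fraction / residue computation together with the distinctness hypothesis: the simple-pole form is valid only when all $\mu_k$ are distinct (so that no denominator $x_k\lambda_{il}-x_l\lambda_{ik}$ vanishes), and the reduction to an exponential summand requires $x_j>0$. If any rates coincided one would instead face repeated poles and polynomial-in-$y_i$ correction terms, so establishing or assuming genericity of the data is essential. As a fallback I would prove the formula by induction on $n$, convolving the $(n-1)$-fold density with a rate-$\mu_n$ exponential, but the transform route above is more transparent and avoids the nested convolution integrals.
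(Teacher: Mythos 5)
Your argument is correct and complete. Note, however, that the paper does not prove this statement at all: Theorem~\ref{theorem2} is imported verbatim from Biswal, Biswal and Li (1998) as a cited auxiliary result, so there is no in-paper proof to compare against. Your moment-generating-function route is the standard derivation of the hypoexponential density and checks out in every step: each $a_{ij}x_j$ is exponential with rate $\mu_j=\lambda_{ij}/x_j$, independence gives $M_{Y_i}(z)=\prod_j \mu_j/(\mu_j-z)$, the simple-pole partial fraction expansion yields residues $A_k=\mu_k\prod_{l\neq k}\mu_l/(\mu_l-\mu_k)$, and the substitution $\mu_l-\mu_k=(x_k\lambda_{il}-x_l\lambda_{ik})/(x_kx_l)$ collapses the coefficient to $x_k^{\,n-2}\prod_{j}\lambda_{ij}\big/\prod_{l\neq k}(x_k\lambda_{il}-x_l\lambda_{ik})$, exactly as claimed. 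You are also right to flag the two hypotheses the theorem statement leaves implicit --- positivity of the $x_j$ and pairwise distinctness of the effective rates $\lambda_{ik}/x_k$ --- since both are needed for the formula to make sense (and the latter is silently assumed in the paper's subsequent application, where $x_k$ are portfolio weights that can vanish, making some denominators degenerate). Your proposed fallback by induction on $n$ via convolution is essentially the route taken in the cited source, so either path is legitimate; the transform argument you chose is the cleaner of the two.
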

Thus the probability density function of our random variable $Y=\displaystyle{\sum_{j=1}^{n}}(\mu_j^{(j)} x_j)\zeta_j$ given by,
\begin{align*}
	\begin{aligned}	
		&g(y)=\left \{ \begin{array}{ll} \displaystyle{\prod_{i=1}^{n}} \lambda_{i} \displaystyle{\sum_{k=1}^{n}} \dfrac{(\mu_k^{(k)} x_k)^{n-2}e^{\frac{-\lambda_{k}y}{\mu_k^{(k)} x_k}}}{\displaystyle{\displaystyle{\prod_{\substack{l=1 \\ l\neq k}}^{n}}} (\mu_k^{(k)} x_k \lambda_{l}-\mu_l^{(l)} x_l \lambda_{k})} &\mbox{ if } y > 0\\
			0 &\mbox{ elsewhere } \end{array} \right.
	\end{aligned}
\end{align*}
We can rewrite the chance constraint \eqref{eq7} as,
\begin{align}
	\begin{aligned}	\label{eq17}
		& \int_{0}^{\tau-{\sum_{j=1}^{n}}\mu_j^{(0)} x_j} g(y) \cdot dy \leq 1- \beta
	\end{aligned}
\end{align}
Simplifying the left-hand side of the constraint \eqref{eq17}, we get,
\begin{align*}
	\begin{aligned}
\int_{0}^{\tau-{\sum_{j=1}^{n}}\mu_j^{(0)} x_j} g(y) \cdot dy = & \prod_{i=1}^{n} \lambda_{i} \int_{0}^{\tau-{\sum_{j=1}^{n}}\mu_j^{(0)} x_j} \left [\sum_{k=1}^{n} \dfrac{(\mu_k^{(k)} x_k)^{n-2}e^{\frac{-\lambda_{k}y}{\mu_k^{(k)} x_k}}}{\displaystyle{\prod_{\substack{l=1 \\ l\neq k}}^{n}} (\mu_k^{(k)} x_k \lambda_{l}-\mu_l^{(l)} x_l \lambda_{k})} \right ] \cdot dy\\
= & \prod_{i=1}^{n} \lambda_{i}  {\left [\sum_{k=1}^{n} \dfrac{(\mu_k^{(k)} x_k)^{n-1}e^{\frac{-\lambda_{k}y}{\mu_k^{(k)} x_k}}}{-\lambda_k\displaystyle{\prod_{\substack{l=1 \\ l\neq k}}^{n}} (\mu_k^{(k)} x_k \lambda_{l}-\mu_l^{(l)} x_l \lambda_{k})}  \right ]} \Biggr|_{0}^{\tau-{\sum_{j=1}^{n}}\mu_j^{(0)} x_j}\\
= & \prod_{i=1}^{n} \lambda_{i}  \left [\sum_{k=1}^{n} \dfrac{(\mu_k^{(k)} x_k)^{n-1}\left(e^{\frac{-\lambda_{k}(\tau-{\sum_{j=1}^{n}}\mu_j^{(0)} x_j)}{\mu_k^{(k)} x_k}}-1\right)}{-\lambda_k\displaystyle{\prod_{\substack{l=1 \\ l\neq k}}^{n}} (\mu_k^{(k)} x_k \lambda_{l}-\mu_l^{(l)} x_l \lambda_{k})}  \right ] 
	\end{aligned}
\end{align*}
Finally the constraint \eqref{eq17} reduces to,
\begin{align}
	\begin{aligned}	\label{eq18}
		& \prod_{i=1}^{n} \lambda_{i}  \left [\sum_{k=1}^{n} \dfrac{(\mu_k^{(k)} x_k)^{n-1}\left (e^{\frac{-\lambda_{k}(\tau-{\sum_{j=1}^{n}}\mu_j^{(0)} x_j)}{\mu_k^{(k)} x_k}}-1\right )}{-\lambda_k\displaystyle{\prod_{\substack{l=1 \\ l\neq k}}^{n}} (\mu_k^{(k)} x_k \lambda_{l}-\mu_l^{(l)} x_l \lambda_{k})}  \right ] \leq 1- \beta
	\end{aligned}
\end{align}
Now we can replace the chance constraint in \eqref{eq4} with the deterministic constraint \eqref{eq18} to obtain the robust counterpart of the problem.\\
Therefore robust counterpart of the chance constrained problem \eqref{eq4} can be written in a nonlinear problem as,
\begin{align}
\begin{aligned}	\label{eq19}
	&\min_{x_i} && \frac{1}{2} \sum_{i=1}^{n} \sum_{j=1}^{n} \sigma_{ij} x_i x_j \\
	&\textrm{s.t.:} && \prod_{i=1}^{n} \lambda_{i}  \left [\sum_{k=1}^{n} \dfrac{(\mu_k^{(k)} x_k)^{n-1}\left (e^{\frac{-\lambda_{k}(\tau-{\sum_{j=1}^{n}}\mu_j^{(0)} x_j)}{\mu_k^{(k)} x_k}}-1\right )}{-\lambda_k\displaystyle{\prod_{\substack{l=1 \\ l\neq k}}^{n}} (\mu_k^{(k)} x_k \lambda_{l}-\mu_l^{(l)} x_l \lambda_{k})}  \right ] \leq 1- \beta,\\
	& && \sum_{i=1}^{n} {x_i}=1, \quad {x_i}\geq 0, \ i=1, 2, \dots, n,
\end{aligned}
\end{align}
which summarizes the following result:
\begin{theorem} \label{theorem3}
Consider the chance constrained portfolio optimization problem \eqref{eq4}. Let the uncertain parameters $\zeta_j, \ j=1, 2, \dots, n$ are independent exponential random variables with means $\dfrac{1}{\lambda_j}, \ j=1, 2, \dots, n$. Then the deterministic form of the robust counterpart of the problem \eqref{eq4} is given by a nonlinear problem,
\begin{align*}
	\begin{aligned}	
		&\min_{x_i} && \frac{1}{2} \sum_{i=1}^{n} \sum_{j=1}^{n} \sigma_{ij} x_i x_j \\
		&\textrm{s.t.:} && \prod_{i=1}^{n} \lambda_{i}  \left [\sum_{k=1}^{n} \dfrac{(\mu_k^{(k)} x_k)^{n-1}\left (e^{\frac{-\lambda_{k}(\tau-{\sum_{j=1}^{n}}\mu_j^{(0)} x_j)}{\mu_k^{(k)} x_k}}-1\right )}{-\lambda_k\displaystyle{\prod_{\substack{l=1 \\ l\neq k}}^{n}} (\mu_k^{(k)} x_k \lambda_{l}-\mu_l^{(l)} x_l \lambda_{k})}  \right ] \leq 1- \beta,\\
		& && \sum_{i=1}^{n} {x_i}=1, \quad {x_i}\geq 0, \ i=1, 2, \dots, n.
	\end{aligned}
\end{align*}
\end{theorem}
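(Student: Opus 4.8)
The plan is to follow the same template used for the normal case: because the objective in problem \eqref{eq4} is deterministic, it coincides with its own robust value, so constructing the robust counterpart reduces entirely to replacing the probabilistic chance constraint \eqref{eq5} by an equivalent deterministic inequality. Writing that constraint in the component form \eqref{eq7}, the whole task collapses to computing the cumulative distribution function $F_Y$ of the single scalar random variable $Y=\sum_{j=1}^{n}(\mu_j^{(j)}x_j)\zeta_j$ and then evaluating it at the point $\tau-\sum_{j=1}^{n}\mu_j^{(0)}x_j$.

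The first step is to identify the law of $Y$, and here the approach must depart sharply from the normal case: a weighted sum of independent exponentials with distinct rates is not itself exponential, and the moment-generating-function trick that made Lemma \ref{lemma1} so clean no longer yields a recognizable closed form. Instead I would invoke Theorem \ref{theorem2} of Biswal et al., matching their random variables $a_{ij}$ to our perturbations $\zeta_j$ (exponential with rate $\lambda_j$) and their scalars $x_j$ to the coefficients $\mu_j^{(j)}x_j$. Substituting these identifications into the stated density produces the explicit probability density function $g(y)$ of $Y$ for $y>0$, expressed as a sum over $k$ of exponential terms $e^{-\lambda_k y/(\mu_k^{(k)}x_k)}$ weighted by the partial-fraction-type coefficients carried in the denominators.

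Next I would turn the chance constraint into the integral inequality \eqref{eq17}, namely $\int_0^{\tau-\sum_j \mu_j^{(0)}x_j} g(y)\,dy \le 1-\beta$, where the lower limit is $0$ rather than $-\infty$ because the exponential density is supported on the positive half-line. Since $g$ is a finite sum of pure exponentials, the integration is performed termwise, each term integrating to another exponential divided by its own rate; evaluating between the limits and collecting the common factor $\prod_{i=1}^n \lambda_i$ yields exactly the deterministic inequality \eqref{eq18}. Substituting this inequality for the chance constraint, while leaving the budget constraint $\sum_i x_i=1$ and the nonnegativity constraints $x_i\ge 0$ untouched, delivers the nonlinear program \eqref{eq19}, which is the asserted robust counterpart.

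The main obstacle is the first step: correctly establishing and applying the density of the weighted sum of heterogeneous exponentials. The partial-fraction coefficients $\prod_{l\ne k}(\mu_k^{(k)}x_k\lambda_l-\mu_l^{(l)}x_l\lambda_k)^{-1}$ are delicate, and one must ensure that these denominators do not vanish, which amounts to requiring the effective rates $\lambda_k/(\mu_k^{(k)}x_k)$ to be pairwise distinct, and that the scaling of each $\zeta_j$ by $\mu_j^{(j)}x_j$ is transported correctly through Theorem \ref{theorem2}. Once the density is pinned down, the remaining integration and algebraic rearrangement are entirely routine.
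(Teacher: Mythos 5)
Your proposal is correct and follows essentially the same route as the paper: invoke Theorem \ref{theorem2} of Biswal et al.\ with the identification $a_{ij}\mapsto\zeta_j$ and $x_j\mapsto\mu_j^{(j)}x_j$ to obtain the density $g(y)$ of $Y$, integrate it termwise over $[0,\tau-\sum_j\mu_j^{(0)}x_j]$ to arrive at the deterministic inequality \eqref{eq18}, and substitute that for the chance constraint in \eqref{eq4}. Your added remark that the denominators $\prod_{l\neq k}(\mu_k^{(k)}x_k\lambda_l-\mu_l^{(l)}x_l\lambda_k)$ must be nonzero (i.e.\ the effective rates pairwise distinct) is a caveat the paper leaves implicit, but it does not alter the argument.
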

\section{Numerical Example} \label{sec4}
\subsection{Data Description}
For our example, we consider the stock market data of three major sector indices of India: (i) Nifty Bank (Banking Sector), (ii) Nifty Infra (Infrastructure Sector), (iii) Nifty IT (Information Technology Sector). The stock price data of these sectors are collected from \href{https://finance.yahoo.com}{https://finance.yahoo.com} for the period June 2017 to May 2022. These stock price values are illustrated in Fig. \ref{fig1}.
\begin{figure}[h!]
\centering 
	\includegraphics[height=7.5 cm]{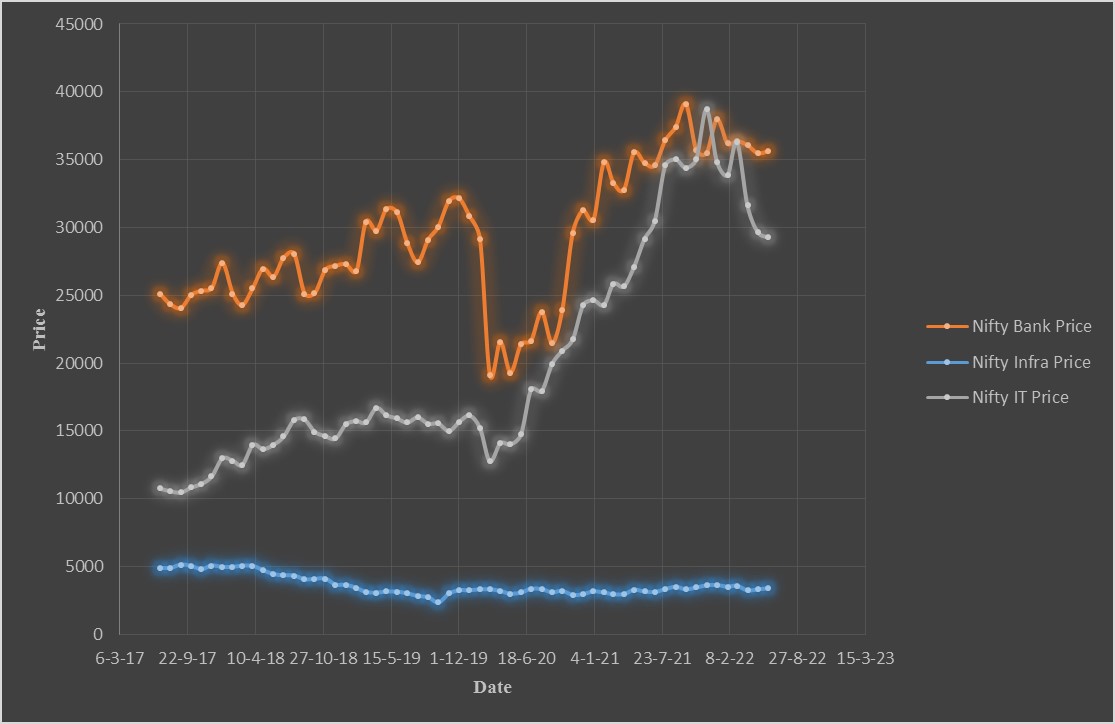}
	\caption{Stock Price Data of Three Sectors: Nifty Bank, Nifty Infra and Nifty IT from June 2017 to May 2022}
	 \label{fig1}
\end{figure}
\\
It can be observed that Nifty IT gives the highest return during this period, whereas Nifty Infra provides the least return. However, Nifty Infra is the least volatile sector, so it is the least risky and Nifty Bank and Nifty IT are the more risky sectors during this period. Our objective is to use this data to determine the optimal allocation among these three sectors to achieve a target portfolio return with a minimum portfolio risk.
For this, we first calculate the expected quarterly returns and the covariance of returns of the three sectors, which are to be used as the input parameters for our portfolio problem. Considering the uncertainty into account, we term the "input parameters" as the "nominal values of the input parameters", which are given in Table \ref{table1}.
\begin{table}[h!]
	\centering
	\caption{Nominal Values of the Input Parameters}
	\begin{tabular}{| c | c | c  c  c |}
		\hline
		\cline{1-5}
		\textbf{Sectors} &  {\textbf{Expected Returns (in $\%$)}} & \multicolumn{3}{c |} {\textbf{Covariance of Returns (in $\%$)}} \\
		\cline{3-5}
		{} & {} & Nifty Bank & Nifty Infra & Nifty IT  \\
		\hline
	 Nifty Bank & $2.609$ & $24.126 $ & $-1.460 $ & $11.032$  \\
		
		 Nifty Infra & $-1.430$ & $-1.460 $ & $8.237$ & $0.461$   \\
		
	 Nifty IT & $6.329 $ & $11.032$ & $0.461$ & $18.034$   \\
	
		\hline
	\end{tabular}
\label{table1}
\end{table}
\subsection{Nominal Problem}
First, we formulate and solve the portfolio problem without considering the uncertainty. From Table \ref{table1}, the nominal expected return vector is given by,
\begin{align*}
	\bm{{\bm{\mu}^{(0)}}} = \begin{bmatrix} 
		\mu_1^{(0)} \\
	\mu_2^{(0)}\\
		\mu_3^{(0)} \\
	\end{bmatrix}
= \begin{bmatrix} 
2.609 \\
-1.430 \\
6.329 \\
\end{bmatrix}
\end{align*}
Then the nominal portfolio problem is formulated as,
\begin{align} 
\begin{aligned} \label{eq21}	
&\min_{x_i} && \frac{1}{2} \left[24.126x_1^2+8.237x_2^2+18.034x_3^2+ 2 \cdot (-1.460) x_1x_2+2 \cdot 11.032 x_1x_3+ 2 \cdot 0.461 x_2x_3 \right]\\
& s.t.: && 2.609x_1-1.430x_2+6.329x_3 \geq \tau,\\
& &&x_1+x_2+x_3=1, \quad x_1, x_2, x_3 \geq 0 
\end{aligned}
\end{align}
Since the portfolio is a combination of all the individual assets, there is a high possibility that the portfolio return lies near the average of individual assets returns. Therefore we assume our target return ($\tau$) as some values within the range from $1.5$ to $3.5$. We solve this problem for these levels of $\tau$ to get the optimal allocations and consequently calculate the corresponding optimal portfolio risks. These optimal solutions for our nominal problem are given in Table \ref{table2}.
\begin{table}[h!]
\centering
\caption{Optimal Solutions for Nominal Problem}
\begin{tabular}{| c | c  c  c | c |}
\hline
\cline{1-5}
\textbf{Target Return($\tau$)} &  \multicolumn{3}{c |} {\textbf{Optimal Allocation}} &  {\textbf{Optimal Portfolio Risk}} \\
\cline{2-4}
{}  & Nifty Bank & Nifty Infra & Nifty IT & {} \\
\hline
$1.5$ & $0.1415$ & $0.5545$ & $0.3040 $ & $2.7788$  \\

$1.7$ & $0.1328$ & $0.5329$ & $0.3343$ & $2.8585$   \\

$1.9$ & $0.1242$ & $0.5113$ & $0.3645$ & $2.9535$   \\

$2.1$ & $0.1155$ & $0.4897$ & $0.3948 $ & $3.0638$  \\

$2.3$ & $0.1068$ & $0.4680$ & $0.4251 $ & $3.1893$   \\

$2.5$ & $0.0982$ & $0.4464$ & $0.4554$ & $3.3301$   \\

$2.7$ & $0.0895$ & $0.4248$ & $0.4857$ & $3.4861$  \\

$2.9$ & $0.0809$ & $0.4032$ & $0.5160 $ & $3.6574$  \\

$3.1$ & $0.0722$ & $0.3815$ & $0.5463$ & $3.8440$   \\

$3.3$ & $0.0635$ & $0.3599$ & $0.5765 $ & $4.0459$  \\

$3.5$ & $0.0549$ & $0.3383$ & $0.6068$ & $4.2630$   \\

\hline
\end{tabular}
\label{table2}
\end{table}
\\
Using these results, we plot the optimal allocation graph for the nominal problem in Fig. \ref{fig2}, where the black portion indicates the weight of the Bank Nifty sector, red indicates the Nifty Infra's weight, and blue indicates Nifty IT's weight in the optimal portfolio. Moreover, from different levels of target returns and their corresponding optimal risks, we plot the efficient frontier, which is illustrated in Fig. \ref{fig3}. The efficient frontier is a risk-return tradeoff curve, which plots the optimal risk values at different levels of returns or vice-versa.
\begin{figure}[h!]
\centering
	\includegraphics[height=8 cm]{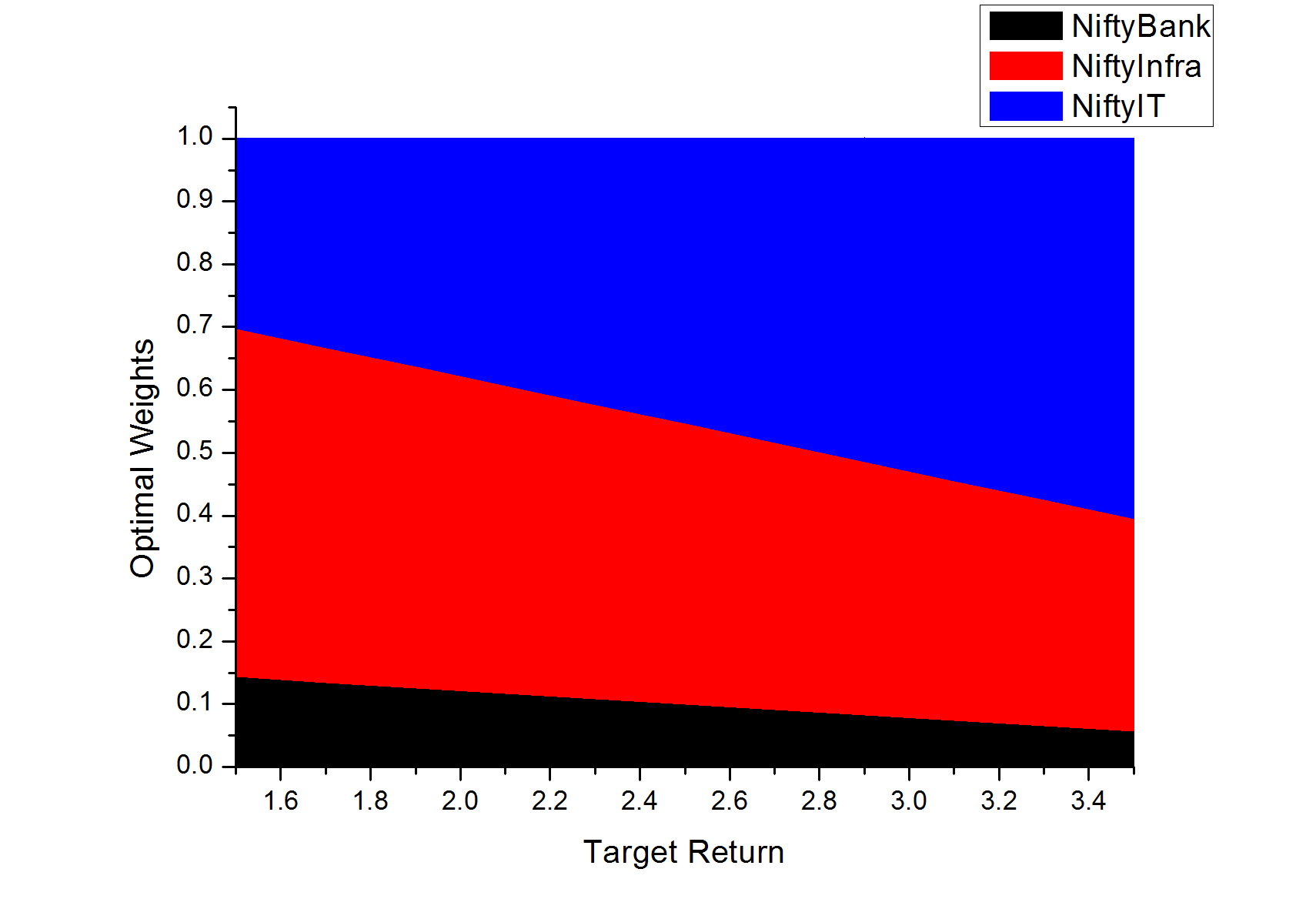}
	\caption{Optimal Allocation for Different Return Levels of the Nominal Problem}
\label{fig2}
\end{figure}
\begin{figure}[h]
\centering
	\includegraphics[height=8 cm]{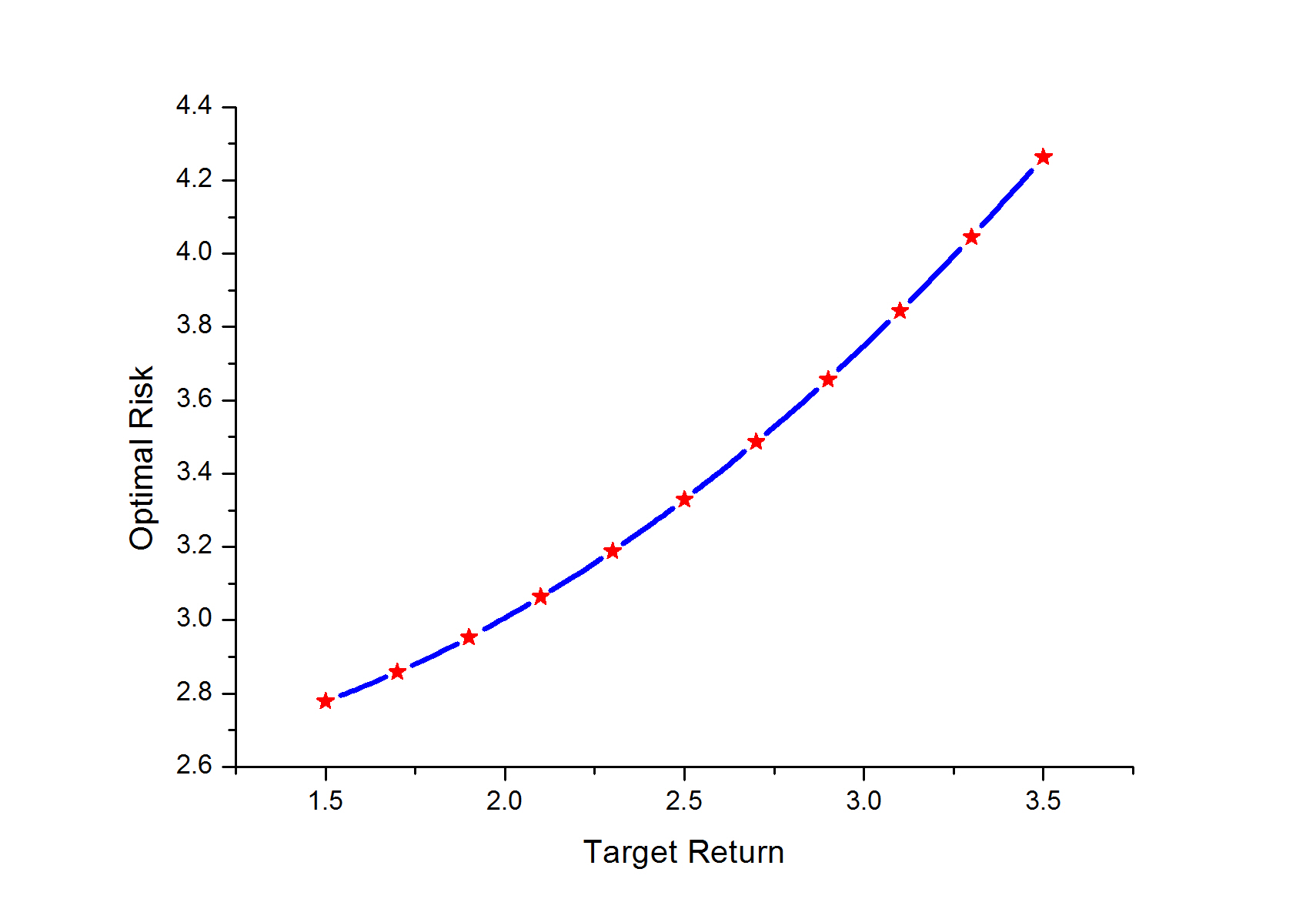}
	\caption{Efficient Frontier of the Portfolio for the Nominal Problem}
\label{fig3}
\end{figure}
\subsection{Chance Constraint Based Problem}
Now taking uncertainty into account, we convert the portfolio problem into a chance constraint based optimization problem. First, the quantification of uncertainty in expected returns are assumed as follows:
Let $\zeta_1$, $\zeta_2$, $\zeta_3$ be the perturbations associated with the expected return vector, and the basic shifts of the perturbations are given by,
\begin{align*}
	\bm{{\bm{\mu}^{(1)}}} = \begin{bmatrix} 
		\mu_1^{(1)} \\
		\mu_2^{(1)}\\
		\mu_3^{(1)} \\
	\end{bmatrix}
	= \begin{bmatrix} 
		0.2 \\
		0 \\
		0 \\
	\end{bmatrix}, 
\quad 
\bm{{\bm{\mu}^{(2)}}} = \begin{bmatrix} 
	\mu_1^{(2)} \\
	\mu_2^{(2)}\\
	\mu_3^{(2)} \\
\end{bmatrix}
= \begin{bmatrix} 
	0 \\
	0.1 \\
	0 \\
\end{bmatrix},
\quad
\bm{{\bm{\mu}^{(3)}}} = \begin{bmatrix} 
	\mu_1^{(3)} \\
	\mu_2^{(3)}\\
	\mu_3^{(3)} \\
\end{bmatrix}
= \begin{bmatrix} 
	0 \\
	0 \\
	0.3 \\
\end{bmatrix}.
\end{align*}
Although the level of confidence $\beta$ of the chance constraint can be any value in the interval $(0,1)$, it should be close to 1 for the solution to be more accurate. Thus, we set the value of $\beta$ as $0.95$ for our problem.\\
Then the chance constrained portfolio problem is obtained as,
\begin{align} 
\begin{aligned}	\label{eq20}
	&\min_{x_i} && \frac{1}{2} \left[24.126x_1^2+8.237x_2^2+18.034x_3^2+ 2 \cdot (-1.460) x_1x_2+2 \cdot 11.032 x_1x_3+ 2 \cdot 0.461 x_2x_3 \right]\\
	& s.t.: && {Prob}_{\bm{\zeta_j}} \left \{2.609x_1-1.430x_2+6.329x_3+0.2x_1 \zeta_1+0.1 x_2 \zeta_2+ 0.3x_3 \zeta_3\geq \tau \right \} \geq 0.95\\
	&  && x_1+x_2+x_3=1, \quad x_1, x_2, x_3 \geq 0
\end{aligned}
\end{align}
By using the results in Theorems \ref{theorem1} and \ref{theorem3}, we obtain the robust counterparts of chance constrained problem \eqref{eq20} for the two cases-- (i) when the perturbations follow the normal distribution, (ii) when the perturbations follow the exponential distribution and solving these robust counterparts we get the robust solutions.
\subsubsection{When the Perturbations Follow Normal Distribution}
Suppose the perturbations $\zeta_1$, $\zeta_2$, $\zeta_3$ follow normal distribution, each with mean $0$ and standard deviation $1$.\\
Then by using Theorem \ref{theorem1}, the deterministic robust counterpart of our chance constrained problem \eqref{eq20} is given by,
\begin{align} 
\begin{aligned} \label{eq22}	
&\min_{x_i} &&  \frac{1}{2} \left[24.126x_1^2+8.237x_2^2+18.034x_3^2+ 2 \cdot (-1.460) x_1x_2+2 \cdot 11.032 x_1x_3+ 2 \cdot 0.461 x_2x_3 \right]\\
& s.t.: && 2.609x_1-1.430x_2+6.329x_3+\sqrt{2} \cdot erf^{-1} (-0.9) \cdot \sqrt {0.04 x_1^2+0.01 x_2^2+0.09 x_3^2} \geq \tau\\
&  && x_1+x_2+x_3=1, \quad x_1, x_2, x_3 \geq 0 \\
\end{aligned}
\end{align}
We solve this problem for different levels of target return ($\tau$) and calculate the corresponding optimal portfolio risks. The problem is solved using MATLAB programming, and the optimal results for the normally distributed perturbation case are given in Table \ref{table3}.
\begin{table}[h!]
	\centering
	\caption{Optimal Solutions When the Perturbations Follow Normal Distribution}
	\begin{tabular}{| c | c  c  c | c |}
		\hline
		\cline{1-5}
		\textbf{Target Return($\tau$)} &  \multicolumn{3}{c |} {\textbf{Optimal Allocation}} &  {\textbf{Optimal Portfolio Risk}} \\
		\cline{2-4}
		{}  & Nifty Bank & Nifty Infra & Nifty IT & {} \\
		\hline
		$1.5$ & $0.1371$ & $0.5321$ & $0.3308 $ & $2.8546$  \\
		
		$1.7$ & $0.1290$ & $0.5087$ & $0.3623$ & $2.9547$   \\
		
		$1.9$ & $0.1210$ & $0.4853$ & $0.3938$ & $3.0723$   \\
		
		$2.1$ & $0.1129$ & $0.4617$ & $0.4253 $ & $3.2075$  \\
		
		$2.3$ & $0.1049$ & $0.4382$ & $0.4570 $ & $3.3602$  \\
		
		$2.5$ & $0.0968$ & $0.4145$ & $0.4887$ & $3.5307$   \\
		
		$2.7$ & $0.0888$ & $0.3908$ & $0.5204$ & $3.7188$  \\
		
		$2.9$ & $0.0807$ & $0.3671$ & $0.5522 $ & $3.9248$   \\
		
		$3.1$ & $0.0727$ & $0.3433$ & $0.5840$ & $4.1485$   \\
		
		$3.3$ & $0.0646$ & $0.3196$ & $0.6158 $ & $4.3901$  \\
		
		$3.5$ & $0.0566$ & $0.2958$ & $0.6477$ & $4.6494$   \\
		
		\hline
	\end{tabular}
\label{table3}
\end{table}
\\
Using these results, we plot the optimal allocation graph and the efficient frontier for the problem with the normally distributed perturbation, respectively given in Fig. \ref{fig4} and Fig. \ref{fig5}.
\begin{figure}[h!]
\centering
	\includegraphics[height=8 cm]{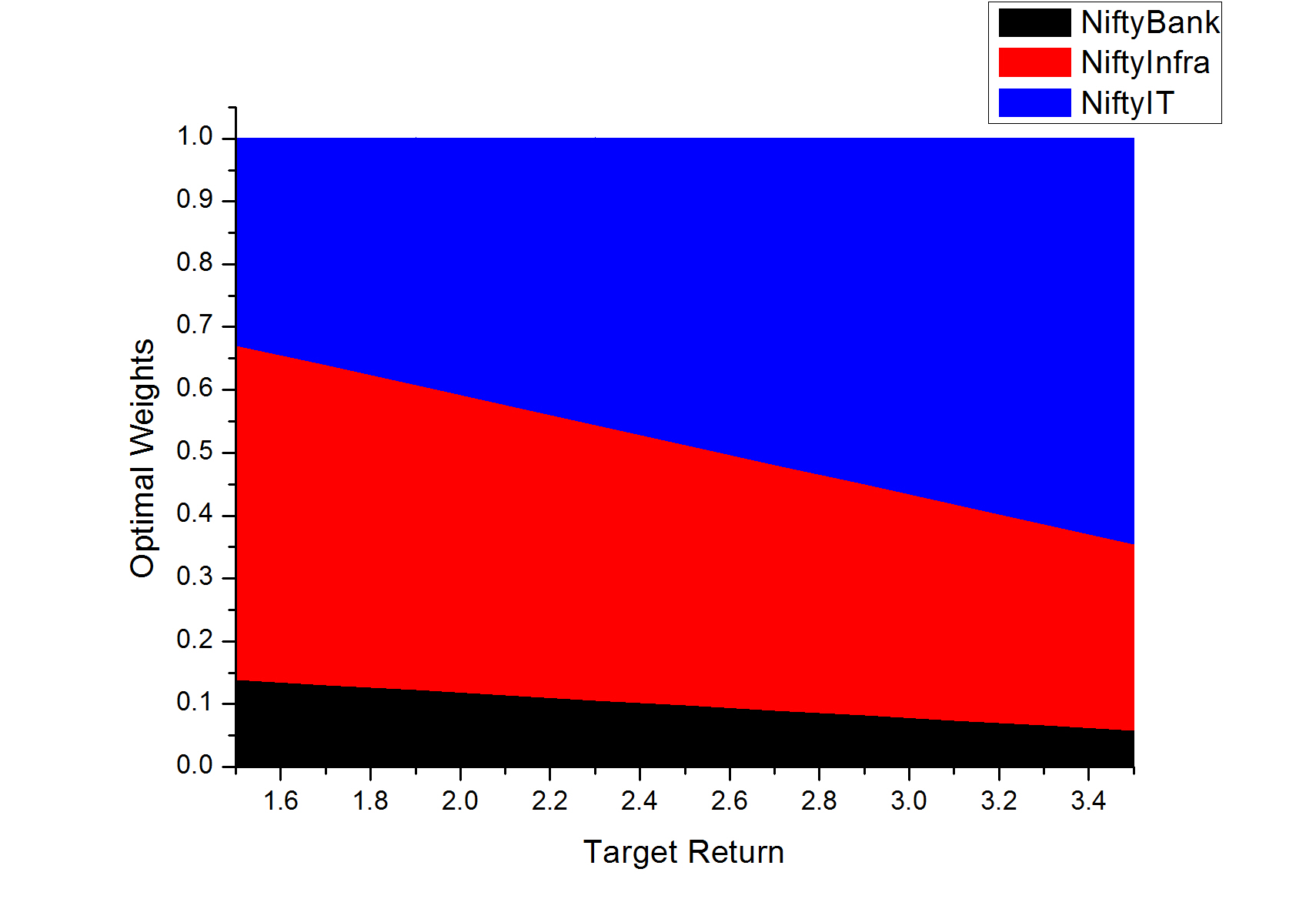}
	\caption{Optimal Allocation for Different Return Levels When the Perturbations Follow Normal Distribution}
\label{fig4}
\end{figure}
\begin{figure}[h!]
\centering
	\includegraphics[height=8 cm]{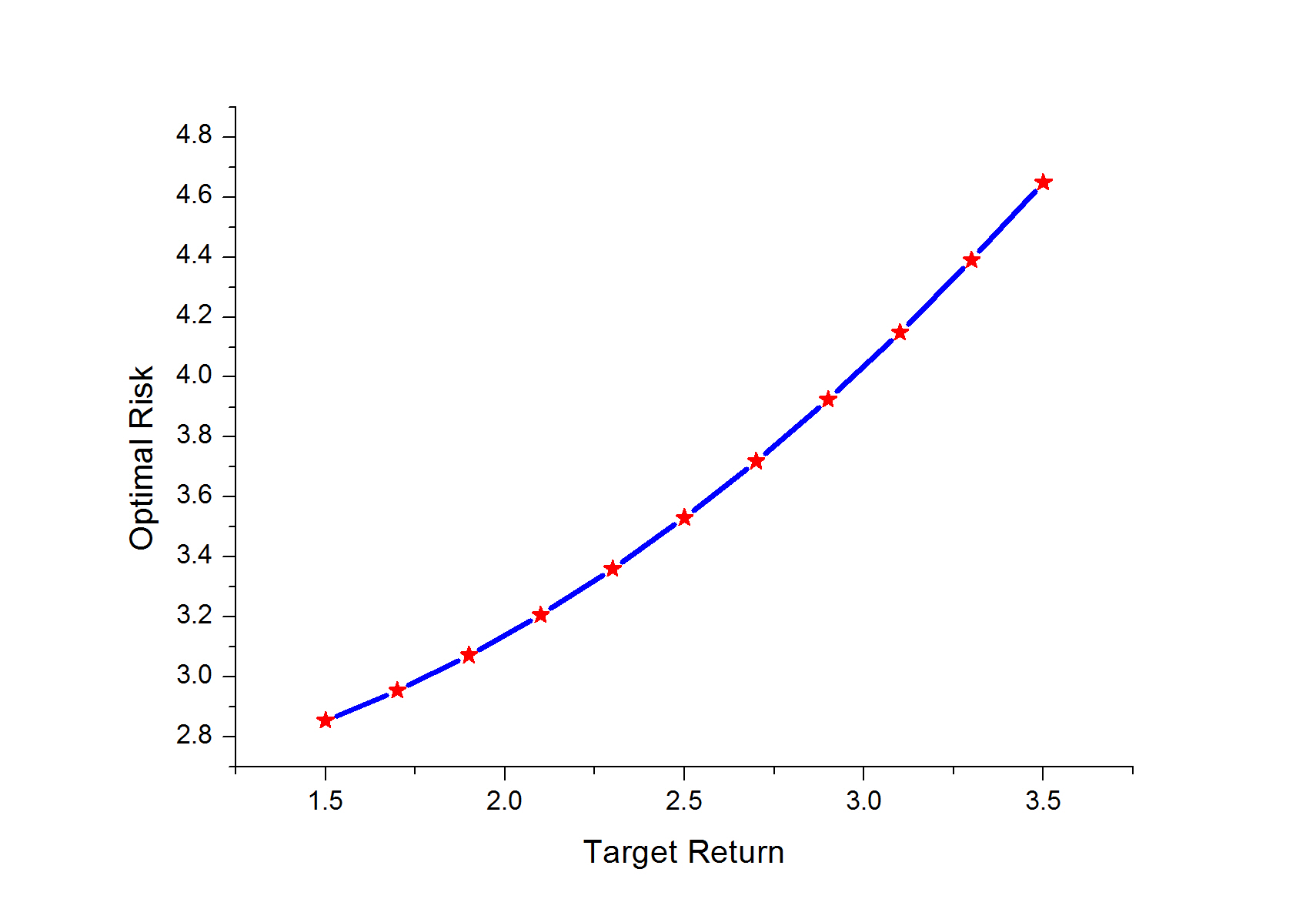}
	\caption{Efficient Frontier of the Portfolio When the Perturbations Follow Normal Distribution}
\label{fig5}
\end{figure}
\subsubsection{When the Perturbations Follow Exponential Distribution}
Suppose the perturbations $\zeta_1$, $\zeta_2$, $\zeta_3$ follow exponential distribution each with mean $1$. So we have, $\lambda_1=\lambda_2=\lambda_3=1$.\\
Then by using Theorem \ref{theorem3} the deterministic robust counterpart of our chance constrained problem \eqref{eq20} is written as,
\begin{equation}
\begin{aligned}	\label{eq23}
\begin{split}
&\min_{x_i} && \frac{1}{2} \left[24.126x_1^2+8.237x_2^2+18.034x_3^2+ 2 \cdot (-1.460) x_1x_2+2 \cdot 11.032 x_1x_3+ 2 \cdot 0.461 x_2x_3 \right]\\
& s.t.: && (1) \cdot \Biggl[ \dfrac{0.04 \cdot x_1^2 \cdot \left (e^{\frac{-[\tau-(2.609x_1-1.430x_2+6.329x_3)]}{0.2 x_1}}-1\right )}{(-1)[(0.2)x_1(1)-(0.1)x_2(1)]\cdot[(0.2)x_1(1)-(0.3)x_3(1)]} \\
& && \quad \qquad +\dfrac{0.01 x_2^2 \cdot \left (e^{\frac{-[\tau-(2.609x_1-1.430x_2+6.329x_3)]}{0.1 x_2}}-1 \right )}{(-1)[(0.1)x_2(1)-(0.2)x_1(1)] \cdot [(0.1)x_2(1)-(0.3)x_3(1)]} \\
& && \quad \qquad +\dfrac{0.09 x_3^2 \cdot \left ( e^{\frac{-[\tau-(2.609x_1-1.430x_2+6.329x_3)]}{0.3 x_3}}-1 \right )}{(-1)[(0.3)x_3(1)-(0.2)x_1(1)] \cdot [(0.3)x_3(1)-(0.1)x_2(1)]} \Biggr] \leq 0.05\\
&  && x_1+x_2+x_3=1, \quad x_1, x_2, x_3 \geq 0 \\
\end{split}
\end{aligned}
\end{equation}
We solve this problem for different levels of target return ($\tau$) and calculate the corresponding optimal portfolio risks. The problem is solved using MATLAB programming and the optimal results for the exponential distribution case are given in Table \ref{table4}.
\begin{table}[ht]
	\centering
	\caption{Optimal Solutions When the Perturbations Follow Exponential Distribution}
	\begin{tabular}{| c | c  c  c | c |}
		\hline
		\cline{1-5}
		\textbf{Target Return($\tau$)} &  \multicolumn{3}{c |} {\textbf{Optimal Allocation}} &  {\textbf{Optimal Portfolio Risk}} \\
		\cline{2-4}
		{}  & Nifty Bank & Nifty Infra & Nifty IT & {} \\
		\hline
		$1.5$ & $0.0001$ & $0.6216$ & $0.3783 $ & $2.9906$  \\
		
		$1.7$ & $0.0000$ & $0.5963$ & $0.4036$ & $3.0447$   \\
		
		$1.9$ & $0.0746$ & $0.4818$ & $0.4436$ & $3.2087$   \\
		
		$2.1$ & $0.0000$ & $0.5450$ & $0.4550 $ & $3.2045$  \\
		
		$2.3$ & $0.0083$ & $0.4985$ & $0.4932$ & $3.3701$   \\
		
		$2.5$ & $0.0000$ & $0.4933$ & $0.5067$ & $3.4324$   \\
		
		$2.7$ & $0.0003$ & $0.4632$ & $0.5366$ & $3.5955$  \\
		
		$2.9$ & $0.0006$ & $0.4411$ & $0.5583 $ & $3.7287$   \\
		
		$3.1$ & $0.0782$ & $0.3840$ & $0.5378$ & $3.8042$   \\
		
		$3.3$ & $0.0001$ & $0.3758$ & $0.6240 $ & $4.2021$  \\
		
		$3.5$ & $0.0000$ & $0.3388$ & $0.6612$ & $4.5184$   \\
		
		\hline
	\end{tabular}
\label{table4}
\end{table}
Using these results, we plot the optimal allocation graph and the efficient frontier for the problem with exponentially distributed perturbations, which are respectively given in Fig. \ref{fig6} and Fig. \ref{fig7}.
\begin{figure}[h!]
\centering 
\includegraphics[height=8 cm]{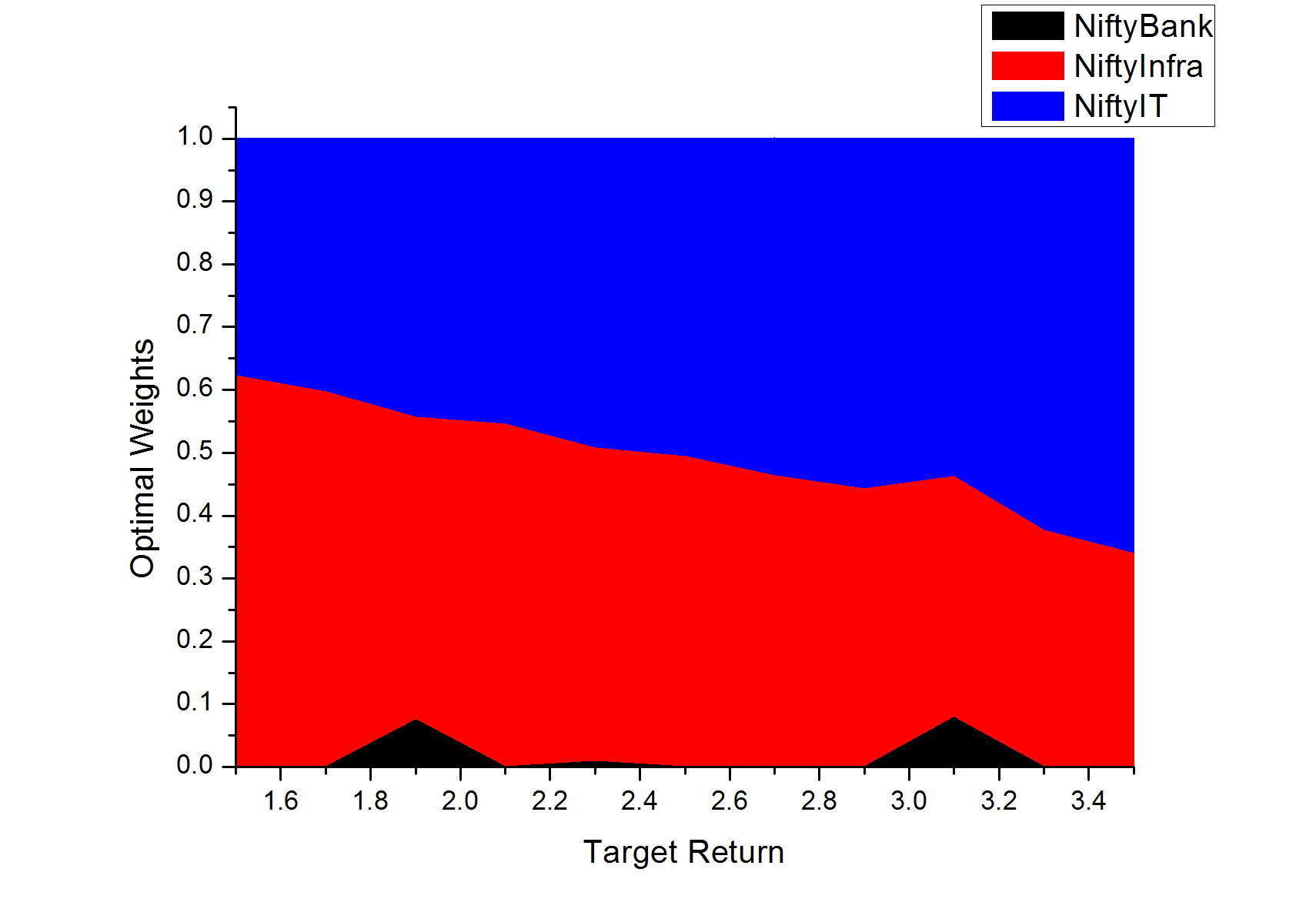}
\caption{Optimal Allocation for Different Return Levels When the Perturbations Follow Exponential Distribution}
\label{fig6}
\end{figure}
\begin{figure}[h!]
\centering 
\includegraphics[height=8 cm]{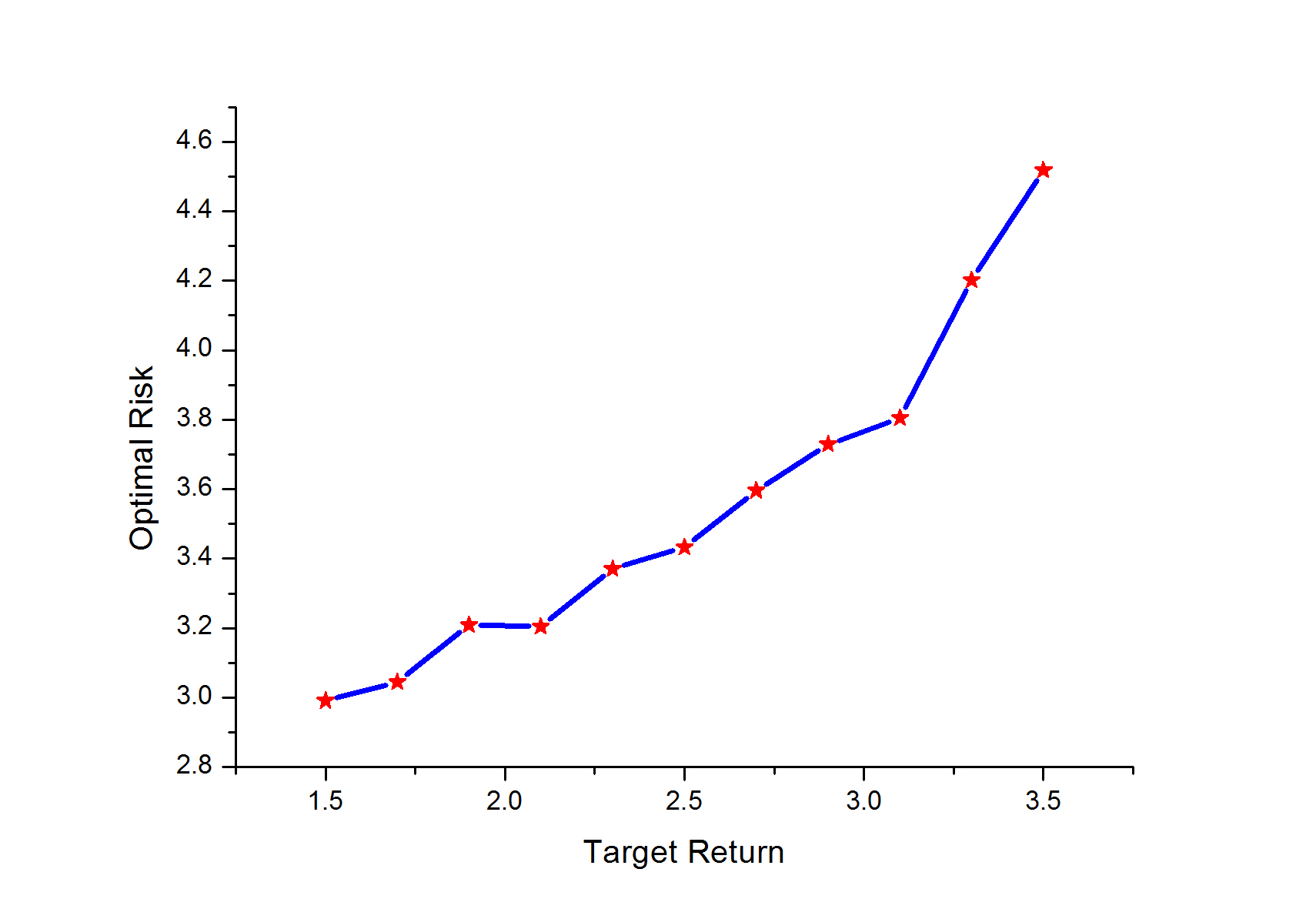}
\caption{Efficient Frontier of the Portfolio When the Perturbations Follow Exponential Distribution}
\label{fig7}
\end{figure}
\subsection{Dissimilarity Analysis of the Models}
To check how much the robust portfolio models under normal and exponential distributions differ from the nominal portfolio model, first we obtain the dissimilarity matrix $(D)$ of these models. The entry $D_{pq}$ $(p, q=1, 2, 3)$ of matrix $D$ is the distance between $p^{th}$ and $q^{th}$ models. It is clear that $D$ is a symmetric matrix with zeros as the diagonal elements. In our problem, distances among the three models are calculated from the optimal portfolio risk results obtained in Tables \ref{table2}, \ref{table3}, and \ref{table4}. Since the associated data for finding the distances (i.e., the optimal portfolio risks) are of numeric type, we can use the Euclidean norm as a distance measure. For instance, let us denote the indices of nominal, robust normal and robust exponential models as $1, 2, 3$ respectively. Then distance between the results of nominal and robust normal model is calculated as,
\begin{align*} 
\begin{aligned}
& D_{12} \text{ or } D_{21}= \sqrt{(2.7788-2.8546)^2+(2.8585-2.9547)^2+\dots+(4.2630-4.6494)^2}=0.8298
\end{aligned}
\end{align*}
Consequently the dissimilarity matrix among the three models is obtained as,
\begin{align}
D = \left(
\begin{array}{c|c | c | c}
 & \textbf{Nominal}  & \textbf{Robust Normal} & \textbf{Robust Exponential}\\
\hline
\textbf{Nominal} & 0 & 0.8298 & 0.5621\\
\textbf{Robust Normal} & 0.8298 & 0 & 0.5195\\
\textbf{Robust Exponential} & 0.5621 & 0.5195 & 0
\end{array}
\right)
\end{align}
The more is the distance between the results of two models; the more is the dissimilarity between them. Thus it is evident from the dissimilarity matrix that results of the robust model with normally distributed perturbations are very much dissimilar from results of the nominal model. This shows that under the uncertain parameters as defined for our numerical example, the normally distributed perturbations can affect the optimal solutions more than the exponentially distributed perturbations. On the other hand, the results of the two robust models are the least dissimilar.

\section{Conclusion} \label{sec5}
This paper discusses the chance constrained portfolio optimization problems. The main objective is to find out the deterministic robust counterpart of such chance constrained problems. In particular, the robust counterpart of uncertain portfolio optimization model is derived when the perturbations follow normal and exponential distributions. The robust counterpart model obtained for the normally distributed perturbations is given by an SOCP problem, whereas the robust model for the exponentially distributed perturbations is given by a nonlinear programming problem. It is found that when the perturbations are normally distributed, each having zero mean, the robust counterpart of the chance constrained portfolio model for confidence level $\beta=0.5$ is the same as the nominal portfolio model. Moreover, the robust counterpart results are used to solve an Indian stock market problem, and the results are analyzed.\\
Since the portfolio model that is considered in the paper is a quadratic programming problem, the results can be used to obtain the robust counterparts of any chance constrained quadratic programming with normal and exponential perturbations. In fact, the deterministic robust counterpart of any chance constrained problem with linear chance constraint(s) can be derived analogously. There is a scope for further research in finding out the robust counterparts of uncertain problems involving nonlinear chance constraints.
\section*{Declarations}
\subsection*{Data Availability}
The Nifty price data of the three sectors– Nifty Bank, Nifty Infra and Nifty IT
used in the study are collected for the time period June 2017 to May 2022 and the data is available in the website \href{https://finance.yahoo.com}{https://finance.yahoo.com}. We also attach the data in the MS Excel file named "Nifty price data.xls".
\subsection*{Conflict of Interest} The authors declare that they have no conflicts of interest.

%
%


\begin{thebibliography}{}
	%
	\bibitem{Markowitz J 1952}
	Markowitz, H., Portfolio Selection, Journal of Finance, Vol. 7, pp. 77-91 (1952).
	\bibitem{Markowitz 1959}
	Markowitz, H., Portfolio Selection: efficient diversification of investments, Basil Blackwell, New York (1959).
	\bibitem{Estrada 2002}
	Estrada, J., Systematic risk in emerging markets: the D-CAPM. Emerging Markets Review, 3(4), pp. 365-379 (2002).
	\bibitem{Estrada 2007}
	Estrada, J., Mean-semivariance behavior: Downside risk and capital asset pricing. International Review of Economics \& Finance, 16(2), pp. 169-185 (2007).
	\bibitem{Meghwani 2017}
	Meghwani, Suraj S., and Thakur M., "Multi-criteria algorithms for portfolio optimization under practical constraints." Swarm and evolutionary computation 37, pp. 104-125 (2017).
	\bibitem{Ghaoui 1997}
	El Ghaoui, L., and Lebret, H., Robust solutions to least-squares problems with uncertain data. SIAM Journal on matrix analysis and applications, 18(4), pp. 1035-1064 (1997).
	\bibitem{Ghaoui 1998}
	El Ghaoui, L., Oustry, F., and Lebret, H., Robust solutions to uncertain semidefinite programs. SIAM Journal on Optimization, 9(1), pp. 33-52 (1998).
	\bibitem{Ben-Tal 1999}
	Ben-Tal, A., and Nemirovski, A.  Robust solutions of uncertain linear programs. Operations research letters, 25(1), pp. 1-13 (1999).
	\bibitem{Ben-Tal 2000}
	Ben-Tal, A., and Nemirovski, A.,  Robust solutions of linear programming problems contaminated with uncertain data. Mathematical programming, 88(3), pp. 411-424 (2000).
	\bibitem{Bertsimas 2011}
	Bertsimas, D., Brown, D. B., and Caramanis, C., Theory and applications of robust optimization. SIAM review, 53(3), pp. 464-501 (2011).
	\bibitem{Calafiore 2006}
	Calafiore, Giuseppe Carlo, and L. El Ghaoui, "On distributionally robust chance-constrained linear programs." Journal of Optimization Theory and Applications 130, no. 1, pp. 1-22 (2006).
	\bibitem{Nemirovski 2012}
	Nemirovski, A., "On safe tractable approximations of chance constraints." European Journal of Operational Research 219, no. 3, pp. 707-718 (2012).
	\bibitem{Li 2015}
	Li, Z., and Li, Z., Optimal robust optimization approximation for chance constrained optimization problem. Computers \& Chemical Engineering, 74, pp. 89-99 (2015).
	\bibitem{Goldfarb 2003}
	Goldfarb, D., and Iyengar, G., Robust portfolio selection problems. Mathematics of operations research, 28(1), pp. 1-38 (2003).
	\bibitem{Tutuncu 2004}
	T\"{u}t\"{u}nc\"{u}, R. H., and Koenig, M., Robust asset allocation. Annals of Operations Research, 132(1-4), pp. 157-187 (2004).
	\bibitem{Bertsimas 2008}
	Bertsimas, D., and Pachamanova, D., Robust multiperiod portfolio management in the presence of transaction costs. Computers \& Operations Research, 35(1), pp. 3-17 (2008).
	\bibitem{Fliege 2014}
	Fliege, J., and Werner, R., Robust multiobjective optimization \& applications in portfolio optimization. European Journal of Operational Research, 234(2), pp. 422-433 (2014).
	\bibitem{Kim 2018}
	Kim, Jang Ho, Woo Chang Kim, Do-Gyun Kwon, and Frank J. Fabozzi, "Robust equity portfolio performance." Annals of Operations Research 266, no. 1, pp. 293-312 (2018).
	\bibitem{Ismail 2019}
	Ismail, A., and Pham, H., Robust Markowitz mean-variance portfolio selection under ambiguous covariance matrix. Mathematical Finance, 29(1), pp. 174-207 (2019).
	\bibitem{Sehgal 2019}
	Sehgal, R. and Mehra, A., Robust reward-risk ratio portfolio optimization. International Transactions in Operational Research (2019).
	\bibitem{Ben-Tal 2009}
	Ben-Tal, A., El Ghaoui, L., and Nemirovski, A. Robust optimization (Vol. 28). Princeton University Press (2009).
	\bibitem{Pulak 2021}
	Swain, P., and Ojha, A. K., Bi-level optimization approach for robust mean-variance problems. RAIRO--Operations Research, 55(5) (2021).
	\bibitem{Fabozzi 2007}
	Fabozzi, F. J., Focardi, S. M., Kolm, P. N., and Pachamanova, D. A., Robust portfolio optimization and management. John Wiley \& Sons, (2007).
    \bibitem{Pagnoncelli 2015}
	Pagnoncelli, B. K., Ahmed, S., and Shapiro, A., Sample average approximation method for chance constrained programming: theory and applications. Journal of optimization theory and applications, 142(2), pp. 399-416 (2009).
	\bibitem{Biswal 1998}
	Biswal, M. P., Biswal, N. P., and Li, D., Probabilistic linear programming problems with exponential random variables: A technical note. European Journal of Operational Research, 111(3), pp. 589-597 (1998).
\end{thebibliography}

\newpage

\end{document}